\newtheorem{theorem}{Theorem}[section]
\newtheorem{corollary}[theorem]{Corollary}
\newtheorem{proposition}[theorem]{Proposition}
\theoremstyle{definition}
\newtheorem{definition}[theorem]{Definition}
\newtheorem{remark}[theorem]{Remark}
\numberwithin{equation}{section}
\DeclareMathOperator{\Tr}{Tr}
\def\bC{\mathbb{C}}
\def\bR{\mathbb{R}}
\def\cM{\mathcal{M}}
\def\cT{\mathcal{T}}
\def\dsds{\frac{\partial s_1}{\partial s}}
\def\dsdu{\frac{\partial s_1}{\partial u}}
\def\duds{\frac{\partial u_1}{\partial s}}
\def\dudu{\frac{\partial u_1}{\partial u}}
\def\dgs{\dot{\gamma}(s)}
\def\dgso{\dot{\gamma}(s_1)}
\def\dgis{\gamma'_j(s)}
\def\dgjs{\gamma'_k(s)}
\def\dgiss{\gamma'_j(s_1)}
\def\dgjss{\gamma'_k(s_1)}
\def\ddgis{\gamma''_j(s)}
\def\ddgiss{\gamma''_j(s_1)}
\def\ddgs{\gamma''(s)}
\def\Fi{F_{v_j}(v)}
\def\Fij{F_{v_jv_k}(v)}
\def\ta{\tilde{a}}
\def\tb{\tilde{b}}
\def\pa{\partial}
\def\ds{\displaystyle}
\begin{document}

\title{Twist Coefficients of Periodic Orbits of Minkowski Billiards}

\author[C. Villanueva]{Carlos Villanueva$^*$}
\address{Department of Mathematics, University of Oklahoma, Norman, OK 73019, USA}
\email{\tt carlosvch@ou.edu}
\thanks{C.V. is supported in part by the Department of Defense SMART Scholarship OMB NO. 0704-0466.}
\thanks{$^*$ Author to whom any correspondence should be addressed.}

\author[P. Zhang]{Pengfei Zhang}
\address{Department of Mathematics, University of Oklahoma, Norman, OK 73019, USA}
\email{\tt pengfei.zhang@ou.edu}

\subjclass[2020]{37G05 37C83 37E40 53B40}

\keywords{Minkowski norm, Finsler norm,  billiards, normal form,
twist coefficient, nonlinear stability}

\dedicatory{To Leonid Bunimovich on his 75th birthday}

\begin{abstract}
We investigate the fundamental properties of Minkowski billiards and introduce a new coordinate system $(s,u)$ on the phase space $\mathcal{M}$. In this coordinate system, the Minkowski billiard map $\mathcal{T}$ preserves the standard area form $\omega = ds \wedge du$. We then classify the periodic orbits of Minkowski billiards with period $2$ and derive formulas for the twist coefficient $\tau_1$ for elliptic periodic orbits, expressed in terms of the geometric characteristics of the billiard table. Additionally, we analyze the stability properties of these elliptic periodic orbits.
\end{abstract}

\maketitle

\tableofcontents

\section{Introduction}\label{sec.introduction}

Dynamical billiards describe a system in which a particle moves within a container and reflects off its walls according to specific reflection rules.
In \cite{Bi27, Bi27b} Birkhoff studied the dynamical billiards on convex domains with smooth boundaries and proved the existence of periodic orbits of period $n$ for any $n\ge 2$.
In \cite{La73} Lazutkin proved the existence of caustics of such convex billiards near the boundaries of the phase space.
Sinai pioneered the study of chaotic billiards. In his seminal paper \cite{Sin70}, he investigated dynamical billiards with dispersing boundaries and proved both the hyperbolicity and ergodicity of such systems.
In \cite{Bu74a, Bu74}, Bunimovich observed that convex circular arcs could be used to construct chaotic billiards and introduced the stadium billiard, which became a favorite model among physicists. The construction of chaotic billiards was significantly extended by Wojtkowski \cite{Woj86}, Markarian \cite{Mar88}, Donnay \cite{Don91}, and Bunimovich \cite{Bu92}.
Since then, several classes of chaotic billiards have been discovered. Notable examples include the elliptic stadium \cite{MaKaPdC}, track billiards \cite{BuDM}, asymmetric lemon billiards \cite{BZZ, JZ21}. An overview of chaotic billiards can be found in \cite{ChMa}.

Birkhoff Normal Form plays an important role in the study of stability properties of elliptic periodic orbits, see \cite{Bi27, SiMo}. Under certain nonresonance assumptions, the dynamical system around an elliptic fixed point can be viewed as a perturbation of an integrable model, 
which is called the normal form of the map. If the integrable model has nonzero twist, then the elliptic fixed point must be nonlinearly stable \cite{Mos56, Mos73}. That is, the fixed point is contained in a nesting sequence of invariant disks whose boundaries are smooth invariant curves.
Generally speaking, the twist coefficients can be represented as rational functions of the coefficients of the Taylor polynomials at the elliptic fixed point, see \cite{Moe90, CGM}. These rational functions, albeit explicit, are generally very involved.

For dynamical systems with geometric background, it is expected that these twist coefficients can be represented as much simpler rational functions of the geometric quantities such as distances and curvatures. In \cite{KP05}, Kamphorst and Pinto-de-Carvalho showed that the first twist coefficient of a elliptic periodic orbit of period 2 of a planar billiard can be expressed a simple rational function in terms of the geometric quantities such as the orbit length and the radii of curvature of the boundary of the billiard table.
In \cite{JZ22}, Jin and Zhang obtained an explicit formula of the second twist coefficient
of periodic orbits of period 2 in terms of the same geometric quantities of the billiards.
The authors also provided  applications of their formula in the study of stability properties to various billiard systems.

Dynamical billiards can also be defined on compact domains of Riemannian manifolds,
on which a particle moves along geodesics in the interior of the domain and makes elastic reflections upon impact on the boundaries, see \cite{Vet, Zha17}. Therefore, billiard flows can be viewed as extensions of geodesic flows to Riemannian manifolds with boundaries. Interestingly,
dynamical billiards on curved surfaces are related to the study of quantum magnetic confinement of non-planar 2D electron gases (2DEG) in semiconductors \cite{FLBP}, where the effect of varying the curvature of the surface corresponds to a change in the potential energy of the system.

In \cite{GT02}, Gutkin and Tabachnikov expanded the theory by introducing dynamical billiards on Minkowski spaces, and more generally, on Finsler manifolds. Unlike Euclidean and Riemannian settings, these spaces are not equipped with an inner product but rather a norm. As a result, the concept of angles, which plays a crucial role in defining reflections in Euclidean and Riemannian billiards, no longer exists. Instead, in \cite{GT02}, they revisited the fundamental concept of reflections through the critical points of the trajectory length function. Since then, several notable results have emerged regarding Minkowski and Finsler billiards (see \cite{AO14, AFOR}), along with some unexpected applications in symplectic and contact geometry (see \cite{AKO14}).

In this paper, we investigate the properties of periodic orbits in dynamical billiards on Minkowski planes. Let $F:\mathbb{R}^2 \to \mathbb{R}$ be a Minkowski norm and $Q \subset \mathbb{R}^2$ be a connected, bounded domain with piecewise smooth boundaries. We consider a point mass moving freely inside $Q$, with reflections occurring at the boundary according to Finsler reflection laws (see Proposition~\ref{pro.Finsler.reflection.law}), thereby defining a Minkowski dynamical billiard system (or {\it Minkowski billiards} for short).
The phase space $\mathcal{M}$ comprises all unit vectors at the boundary $\partial Q$, pointing inward toward $Q$. Notably, this definition aligns with that of Euclidean billiards. However, the conventional arclength-angle coordinate system $(s, \theta)$ is no longer applicable, as the angle is undefined in a normed space.
We introduce a new coordinate system $(s,u)$ for the phase space $\mathcal{M}$ and demonstrate that the Minkowski billiard map $\mathcal{T}$ is symplectic using the generating function method. Subsequently, we express the tangent map $D\mathcal{T}$ in terms of geometric quantities of the billiard table, see Proposition~\ref{pro.DT.Minkowski}. This enables classification of period-$2$ orbits as elliptic, parabolic, or hyperbolic.
Under mild nonresonance conditions for elliptic orbits, we derive a concise formula for the first twist coefficient $\tau_1$ in terms of the geometric quantities of the billiard table, see Theorem~\ref{thm.t1.sym} and \ref{thm.t1.asym}. Combining with Moser's twist mapping theorem, we investigate specific Minkowski billiard families and determine conditions for nonlinear stability of periodic orbits, see Section~\ref{sec.applications}.

\section{Preliminaries}

\subsection{Minkowski spaces and Finsler manifolds}
Finsler geometry has its origin in the study of the calculus of variations. It allows to study more expansive cases such as objects traveling in anisotropic mediums where speed depends on the direction of travel. For a thorough introduction to Minkowski Spaces and Finsler Geometry, see \cite{BCS, ChSh, Ru59}. 

\begin{definition}\label{def.Minkowski}
A \textit{Minkowski norm} on the space $\bR^n$ is a function 
$F: \bR^n \rightarrow [0, \infty)$ satisfying the following conditions:
\begin{enumerate}[(i)]

		\item Regularity: $F$ is $C^\infty$ on $\bR^n \backslash \{0\}$.
		
		\item Positive homogeneity: $F(c \, v) = c\, F(v)$ for any number $c >0$ and any vector $v \in \bR^n$. 
		
		\item Strong convexity: The $n \times n$ Hessian matrix 
		$ (g_{jk}(v)) := \bigg( \frac{\pa^2}{\pa v^j \pa v^k} \big( \frac{1}{2} F^2\big)(v) \bigg)$
		is positive definite at every point in $\bR^n \backslash \{0\}$.
\end{enumerate}
Given a Minkowski norm $F$ on $\bR^n$, the pair $(\bR^n, F)$ is called a \textit{Minkowski space}. 
The subset $I=\{v\in \bR^n: F(v) =1\}$ of all $F$-unit vectors is called  the \textit{indicatrix} of the Minkowski space. 
\end{definition} 
It is clear that the norm $F$ can be recovered from its indicatrix $I$.

\begin{definition}\label{def.Finsler}
Let $M$ be an $n$-dimensional closed manifold. 
A \textit{Finsler structure} on the manifold $M$ is a function 
$F: TM \rightarrow [0, \infty)$ satisfying the following conditions:
\begin{enumerate}[(i)]

		\item Regularity: $F$ is $C^\infty$ on $TM \backslash \{0\}$.
		
		\item Positive homogeneity: $F(x, c \, v) = c\, F(x, v)$ for any $c >0$ and any $v \in T_xM$. 
		
		\item Strong convexity: The $n \times n$ Hessian matrix 
		$ (g_{jk}(x, v)) := \bigg(\frac{\pa^2}{\pa v^j \pa v^k} \big( \frac{1}{2} F^2\big)(x, v) \bigg)$
		is positive definite at every point in $TM \backslash \{0\}$.
\end{enumerate}
Given a manifold $M$ and a Finsler structure $F$ on $TM$, the pair $(M,F)$ is called a \textit{Finsler manifold}. 	
\end{definition}

It is clear that restricting a Finsler structure $F$ to the tangent space $T_xM$ gives rise to a Minkowski space $(T_xM, F(x,\cdot))$ for each $x\in M$, and a Minkowski space can be viewed as a homogeneous Finsler manifold, where the Finsler structure is independent of the base point. 

\begin{definition}
The \textit{Legendre transform} on the Minkowski space $(\bR^n, F)$ is defined  by 
	\begin{align*}
		\mathcal{L}: I &\to (\bR^n)^{\ast} \\
		u &\mapsto p=\mathcal{L}(u)
	\end{align*}
	such that $\ker (p) = T_uI$ and $p(u) = 1$. 
\end{definition}

\subsection{Mixed norm spaces}
Given  two numbers $a>0$ and $b\ge 0$, and a positive integer $k\ge 1$,
we consider the Minkowski space $(\bR^2, F_{a,b, 2k})$,  where the norm $F_{a,b, 2k}$ is given by
\begin{equation}
	F_{a,b,2k}(v) = a\lVert v \rVert_{2} + b\lVert v \lVert_{2k}, 
\end{equation}
where $\lVert v \rVert_2 = \left( v_1^2 + v_2^2 \right)^{\frac{1}{2}}$ and $\lVert v \rVert_{2k} = \left(v_1^{2k} + v_2^{2k}\right)^{\frac{1}{2k}}$. Clearly, $F_{1,0,k}(v) = \lVert v \rVert_2$ is just the usual Euclidean norm on $\bR^2$. 

\begin{proposition}
	$(\bR^2, F_{a,b,2k})$ is a Minkowski space.  \label{prop: Mixed norm is Minkowski}
\end{proposition}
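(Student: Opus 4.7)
The plan is to verify conditions (i)--(iii) of Definition~\ref{def.Minkowski} in turn, with condition (iii) being the essential step. For (i), I observe that $v_1^{2k}+v_2^{2k}>0$ on $\bR^2\setminus\{0\}$, since $2k$ is even so both summands are nonnegative and vanish simultaneously only at the origin; because $t\mapsto t^{1/(2k)}$ is smooth on $(0,\infty)$, the function $\|v\|_{2k}$ is $C^\infty$ on $\bR^2\setminus\{0\}$, and likewise for $\|v\|_2$, so the nonnegative linear combination $F_{a,b,2k}$ is $C^\infty$ there. For (ii), each of $\|v\|_2$ and $\|v\|_{2k}$ is positively homogeneous of degree one, a property preserved under nonnegative linear combinations.

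For (iii), I would exploit the identity valid for any positively homogeneous $F$ of degree one,
\[
g_{jk}(v)=\pa_{v_j v_k}\bigl(\tfrac{1}{2}F^2\bigr)(v) = F_{v_j}(v)\,F_{v_k}(v) + F(v)\,F_{v_jv_k}(v),
\]
obtained by differentiating $\pa_{v_j}(\tfrac12 F^2)=F\cdot F_{v_j}$ once more. Both summands are positive semidefinite on $\bR^2\setminus\{0\}$: the first is a rank-one outer product, and the second is a positive multiple (since $F>0$) of the Hessian of the convex function $F$, the latter being a nonnegative linear combination of convex norms. To upgrade this to strict positive definiteness, I would evaluate $g_{jk}w^jw^k$ for an arbitrary nonzero $w\in\bR^2$ and split into cases based on whether $w$ is radial.

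If $w=\lambda v$ with $\lambda\neq 0$, Euler's identity $F_{v_j}(v)\,v^j = F(v)$ gives $F_{v_j}F_{v_k}w^jw^k = \lambda^2 F(v)^2>0$. If instead $w$ is not proportional to $v$, I would show the second summand is strictly positive. A direct computation gives $(\|v\|_2)_{v_jv_k}=\|v\|_2^{-1}\bigl(\delta_{jk}-v_jv_k/\|v\|_2^2\bigr)$, which is $\|v\|_2^{-1}$ times the projection onto $v^\perp$; this is strictly positive on any direction transverse to $v$. Adding the PSD contribution from $b\|v\|_{2k}$ preserves this positivity, so $F_{v_jv_k}(v)w^jw^k>0$ whenever $a>0$ and $w\notin\operatorname{span}(v)$. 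Multiplying by $F(v)>0$ finishes the case.

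The main subtlety, rather than obstacle, is the need to recognize that $\|v\|_{2k}$ alone is \emph{not} a Minkowski norm when $k\geq 2$: the Hessian of $\tfrac12\|v\|_{2k}^2$ at an axial point such as $v=(1,0)$ has a vanishing eigenvalue in the axial direction, reflecting the zero curvature of the indicatrix $\{v_1^{2k}+v_2^{2k}=1\}$ there. It is precisely the Euclidean term $a\|v\|_2$ with $a>0$ that removes this degeneracy and gives strong convexity of $F_{a,b,2k}$; this is the only point at which the hypothesis $a>0$ (rather than $a\geq 0$) is used.
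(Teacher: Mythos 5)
Your proof is correct, and it takes a genuinely different route from the paper's. The paper works entirely in coordinates: it computes all first and second partials of $F=a\lVert v\rVert_2+b\lVert v\rVert_{2k}$ explicitly, notes the rank-one degeneracy $\left(F_{v_1v_2}\right)^2=F_{v_1v_1}F_{v_2v_2}$ (forced in the plane by $1$-homogeneity, since the Hessian of $F$ annihilates $v$), and uses it to collapse $\det(g_{jk})$ to $F\left(F_{v_1}\sqrt{F_{v_2v_2}}-F_{v_2}\sqrt{F_{v_1v_1}}\right)^2$, concluding from $g_{11}>0$ and $\det(g_{jk})>0$. You instead argue structurally from the same decomposition $g_{jk}=F_{v_j}F_{v_k}+FF_{v_jv_k}$: both pieces are positive semidefinite, and a joint kernel is excluded by a case split on the test vector --- Euler's identity handles $w\in\operatorname{span}(v)$, while for transverse $w$ the Euclidean Hessian $\lVert v\rVert_2^{-1}\bigl(\delta_{jk}-v_jv_k/\lVert v\rVert_2^2\bigr)$, a positive multiple of the projection onto $v^{\perp}$, is strictly positive, with the $\ell^{2k}$ Hessian entering only as a PSD perturbation. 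Your route buys generality (it works verbatim in $\bR^n$ and for $a\lVert\cdot\rVert_2+bN$ with any norm $N$ smooth off the origin) and makes the role of the hypothesis $a>0$ fully transparent; indeed your closing observation pinpoints exactly where the paper's write-up is loose, since its intermediate claim $F_{v_1v_1},F_{v_2v_2}>0$ actually fails on the coordinate axes (at $v=(1,0)$ one has $F_{v_1v_1}=0$ for every $k\ge 2$, even when $a>0$), although the paper's conclusion survives there because $g_{11}\ge (F_{v_1})^2$ and the perfect square reduces to $F\,(F_{v_1})^2F_{v_2v_2}=F(a+b)^2a>0$. What the paper's computation buys in exchange is the explicit list of partials $F_{v_j}$, $F_{v_jv_k}$, which is reused later when evaluating the tangent map $D\cT$ in Section~\ref{ss.tangent.map}. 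One small slip in your side remark: at $v=(1,0)$ the null eigenvector of the Hessian of $\tfrac12\lVert v\rVert_{2k}^2$ is $(0,1)$, the direction tangent to the indicatrix (matching its vanishing curvature there), not the axial direction; this is harmless to your argument.
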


\begin{proof}
	We only need to check that condition $(iii)$ in Definition \ref{def.Finsler} is satisfied as the first two are easy to see. Let $F = F_{a,b,2k}$ and  $(g_{jk}) = \left(\left[\frac{1}{2}F^2\right]_{v_jv_k}\right)$ be the Hessian matrix of $\frac{1}{2}F^2$ whose entries are given by 	
	\begin{align*}
		g_{11} &= \left(F_{v_1}\right)^2 + FF_{v_1v_1}, \\
		g_{12} &= g_{21} = F_{v_1}F_{v_2} + FF_{v_1v_2}, \\
		g_{22} &= \left(F_{v_2}\right)^2 + FF_{v_2v_2}.
	\end{align*}
It suffices to show that $g_{11} > 0$ and $\det(g_{jk})>0$. The first and second order partials of $F$ are given by 
	\begin{align*}
		F_{v_1} &= av_1(v_1^2 + v_2^2)^{-\frac{1}{2}} + bv_1^{2k-1}(v_1^{2k} + v_2^{2k})^{\frac{1}{2k}-1}, \\
		F_{v_2} &= av_2(v_1^2 + v_2^2)^{-\frac{1}{2}} + bv_2^{2k-1}(v_1^{2k} + v_2^{2k})^{\frac{1}{2k}-1}, \\
		F_{v_1v_1} &= \alpha v_2^2 + (2k-1)\beta v_1^{2k-2}v_2^{2k}, \\
		F_{v_2v_2} &= \alpha v_1^2 + (2k-1)\beta v_1^{2k}v_2^{2k-2}, \\
		F_{v_1v_2} &= -v_1v_2\left[ \alpha + \beta(2k-1)(v_1v_2)^{2k-2} \right],
	\end{align*}
where $\alpha = a(v_1^2 + v_2^2)^{-\frac{3}{2}}$ and $\beta = b(v_1^{2k} + v_2^{2k})^{\frac{1}{2k}-2}$. Note $F_{v_1v_1},F_{v_2v_2} > 0$ implies $g_{11} > 0$. Further, $\left( F_{v_1v_2} \right)^2 = F_{v_1v_1}F_{v_2v_2}$, which we can use to show
	\begin{align*}
		\det(g_{jk}) &= F\left[(F_{v_1})^2F_{v_2v_2} - 2F_{v_1}F_{v_2}F_{v_1v_2} + F_{v_1v_1}(F_{v_2})^2 \right] \\[5pt]
		&= F\left(F_{v_1}\sqrt{F_{v_2v_2}}-F_{v_2}\sqrt{F_{v_1v_1}}\right)^2 > 0.
	\end{align*}
	This completes the proof.
\end{proof}

Starting from Section~\ref{sec.billiards.symmetry}, we will restrict to the case $k=2$, and will denote $F_{a,b} = F_{a,b, 2}$ for short.

\subsection{Curvatures in Minkowski planes}

As mentioned in the introduction, the twist coefficients of an elliptic periodic billiard orbit depend on the geometry of the billiard table, namely the curvatures at the points of reflection and the distance between them. To analyze the stability properties of billiard orbits on Minkowski planes, it is helpful to extend the concept of curvature from Euclidean spaces to  Minkowski spaces. In the following we will explain how curvatures extend to Minkowski planes  with norms following the  exposition \cite{BMS19}.

On the Euclidean plane, given a curve $\gamma$ with Euclidean length $\ell$, we can intuitively think of its curvature as how fast the tangent field varies with respect to the distance traveled on $\gamma$. Formally we define curvature as follows.
Let $\gamma:[0, \ell] \rightarrow \bR^2$ be a smooth curve with arc-length parameter $s$, $A_s$ be the area of the sector in the unit circle between $\gamma'(0)$ and $\gamma'(s)$,
and $\nu(s) = 2A_s$. The (signed) curvature of the curve $\gamma$ at $\gamma(s)$ is defined as $\kappa(s) = \nu'(s)$.
Equivalently, since $\gamma''$ is normal to $\gamma'$, we can define $\kappa(s)$ in terms of the positively oriented normal vector field $n(s)$ of unit length: $\gamma''(s) = \kappa(s)n(s)$.
Geometrically, this means that the area of the unit circle determined by the normal field $n(s)$ is the same as the area determined by the tangent field $\gamma'$. Lastly one can show that 
$\kappa(s) = \frac{1}{R(s)}$,
where $R(s)$ is the radius of the circle attached to $\gamma(s)$ such that their tangent lines agree.

While these three definitions of curvature yield the same quantity in the Euclidean plane, they diverge and produce distinct quantities in the more general Minkowski planes, as demonstrated in \cite{BMS19}. The difference arises from using a new definition of orthogonality that is not necessarily symmetric due to the lack of an inner product structure. For the rest of this section, let $(\bR^2, F)$ be a normed plane with $\lVert \cdot \rVert = F(\cdot)$ and $I$ denote the indicatrix at an arbitrary point.

\begin{definition}
	Given two vectors $v,w \in V$,  $v$ is said to be \textit{Birkhoff orthogonal} to $w$, denoted $v \dashv_B w$, if $\lVert v \rVert \leq \lVert v + tw \rVert$ for all $t \in \bR$.
\end{definition}

Geometrically, $v \dashv_B w$ indicates that $T_{v/\lVert v \rVert}I$ is parallel to $w$. Birkhoff orthogonality is not symmetric, and reversing it involves the use of a different norm. In fact, closed curves that do reverse Birkhoff orthogonality are called \textit{Radon Curves}.

\begin{definition}
	Fix a determinant form $[\cdot, \cdot]$ on a normed space $V$ i.e. a non-degenerate bi-linear form such that $[v,v] = 0$ for all $v$. The \textit{anti-norm} on $V$ is defined as 
	\begin{equation}
		\lVert v \rVert_a := \sup\{ |[w,v]| : \lVert w \rVert = 1 \} \label{eq: anti norm}
	\end{equation}
\end{definition}

In $\bR^2$, a determinant form is unique up to a constant multiple since it amounts to fixing an orientation with unit area.    Further, although $\dim V = \dim V^*$, there is no canonical isomorphism between $V$ and $V^*$. However, any isomorphism is given by an identification $v \mapsto \left[\cdot, v \right]$ for any $v \in V$, where $[\cdot, \cdot]$ is a non-degenerate bilinear form.  It follows from Eq.~\eqref{eq: anti norm} that $|[w,v]| \leq \lVert w \rVert \lVert v \rVert_a$, and it is shown in \cite{MS06} that $w \dashv_B v$ if and only if $v \dashv_B^a  w$, where $\dashv_B^a$ denotes Birkhoff orthogonality in the antinorm. Note that whenever $w \dashv_B v$ and $\Vert w \rVert = 1 $, Eq.~\eqref{eq: anti norm} is equal to the dual norm of the functional $[\cdot,v] \in V^*$. So for example, whenever $\frac{1}{p} + \frac{1}{q} = 1$, the norms on $\ell_p$ and $\ell_q$ reverse Birkhoff orthogonality.

Having redefined orthogonality we can now generalize the normal field and the first curvature concept, where curvature was given in terms of the area swept within the unit circle by the tangent field $\gamma'$. Let $n_\gamma : [0,\ell_\gamma] \rightarrow V$ be the vector field such that each $s \in [0,\ell_\gamma]$ gets mapped to the unique vector such that $\gamma'(s) \dashv_B n_\gamma(s)$ and $[\gamma'(s),n(s)] = 1$. We call $n_\gamma$ the \textit{right normal field to $\gamma$}, and in \cite{BMS19} it is shown how $n_\gamma(s)$ is unit in the anti-norm for all $s$.

\begin{definition}
	Let $\gamma : [0,\ell_\gamma] \rightarrow \bR^2$ be a smooth curve of normed length $\ell_\gamma$ with arc-length parameter $s$ (in $\lVert \cdot \rVert$), and $\varphi: [0,2A(I)] \rightarrow \bR^2$ be a parameterization of the indicatrix $I$ by twice the area of its sectors. Identify the tangent field $\gamma'$ within the indicatrix so that 	
	\begin{equation}
		\gamma'(s) = \varphi(\nu(s)),
	\end{equation}
where $\nu: [0, \ell_\gamma] \rightarrow \bR$ maps the arc-length of $T$ to twice the area of the sectors spanned by $\gamma'(0)$ and $\gamma'(s)$. The \textit{Minkowski curvature} of $\gamma$ at $\gamma(s)$ is defined as 
	\begin{equation}
		\kappa_m := \nu'(s). \label{eq: Minkowski curvature}
	\end{equation}
\end{definition}

Differentiating equation (\ref{eq: Minkowski curvature}) yields 
\begin{equation}
	\gamma''(s) = \nu'(s) \frac{d \varphi}{d \nu}(\nu(s)) = \kappa_m(s)n_\gamma(s). \label{eq: d2 gamma}
\end{equation}
Geometrically, this means that as $\gamma'$ rotates about the indicatrix $I$, $n_\gamma$ rotates about the indicatrix of the anti-norm $I_a$. The analogous definition to equation (\ref{eq: Minkowski curvature}), but where the area is spanned by $n_\gamma$ within $I_a$ is called the \textit{normal curvature} and is denoted $\kappa_n$. However, since the indicatrix in the norm and anti-norm are generally not the same, $\gamma'$ and $n_\gamma$ sweep different areas and thus yield different values for $\kappa_m(s)$ and $\kappa_n(s)$.

For later convenience, it will be useful to express equation \eqref{eq: d2 gamma} in terms of a Euclidean structure to facilitate the calculation of the higher order derivatives of $\gamma''(s)$ needed in the Taylor expansion of the billiard map $\mathcal{T}$. Given a parameterization $r(\theta(s))$ of the indicatrix $I$ such that $\gamma'(s) = r(\theta(s))\cdot (\cos\theta(s),\sin\theta(s))$ for $\theta(s) \in [0,2\pi]$, \cite{BMS19} shows how $\kappa_m$ and $n_\gamma$ may be expressed as
\begin{align}
	\kappa_m(s) &= k_e(s)r(\theta(s))^3 \label{eq: km in Euclid}\\[10pt]
	n_\gamma(s) &= \frac{1}{r(\theta(s))^2}\frac{dr}{d\theta}(\theta(s)) \cdot (\cos\theta(s),\sin\theta(s)) \nonumber \\
	&\qquad + \frac{1}{r(\theta(s))} \cdot (-\sin\theta(s),\cos\theta(s)). \label{eq: n in Euclid}
\end{align}

\subsection{Periodic orbits}\label{sec.dysy}
Let $X$ be a topological space and  $f: X \rightarrow X$ a continuous map.
Then the pair $(X, f)$ is called a dynamical system. The iterates of the map $f$ can be defined recursively via
$f^{n+1} = f\circ f^{n}$ for any $n\ge 0$. If $f$ is a homeomorphism, then we can define the backward iterates $f^{-n}$, $n\ge 1$.
A point $x \in X$ is {periodic} if $f^n(x) = x$ for some $n \ge 1$, and the {period} is the smallest $n$ satisfying this condition. Given a periodic point $x$, its orbit $\mathcal{O}(x)$ is a {periodic orbit}. If $f(x) = x$, then $x$ is a {fixed point} of $f$.

In the following we will consider a smooth surface $S$ (possibly with boundaries) with smooth area form $\omega$. 
A diffeomorphism $f: S\to S$ is said to be symplectic if it preserves the  area form $\omega$. 
Let $p$ be a fixed point of the symplectic map $f$,  $(U, \phi)$ be a local coordinate chart around $p$.
Then the tangent map $D_pf : T_p S \to T_p S$ can be viewed as a $2\times 2$ matrix with determinant $1$. Let $\lambda(p,f)$ be an eigenvalue of the matrix $D_pf$ (the other one will be $\frac{1}{\lambda(p,f)}$, if exists).
Even though the entries of $D_p f$ depend on the choice of the local coordinate system  $(U, \phi)$, 
its eigenvalues and hence the trace $\Tr(D_p f)$, do not depend on such choices. The fixed point $p$ is said to be 
\begin{enumerate}
\item parabolic if $\lambda(p,f) =\pm 1$ (or equally, $|\Tr(D_p f)| =2$);

\item hyperbolic if $|\lambda(p,f)| \neq 1$ (or equally, $|\Tr(D_p f)| > 2$);

\item elliptic if $|\lambda(p,f)|= 1$ and $\lambda(p,f)\neq \pm 1$ (or equally, $|\Tr(D_p f)| < 2$).
\end{enumerate}

\subsection{Euclidean billiards}
Let $Q \subset \bR^2$  be a bounded and connected domain with (piecewise) smooth boundary $\gamma=\pa Q$. Consider a point mass that moves within $Q$ freely and makes elastic reflects off the boundary. The induced system is called a Euclidean dynamical billiard, and $Q$ is called the billiard table. For convenience, we will assume that $Q$ is strictly convex  and $\gamma$ is smooth.

 The term Euclidean billiards is not standard. However we will use it repeatedly to distinguish it from a Minkowski billiard system that will be introduced later. All billiard systems are Euclidean for the remainder of this subsection.

The phase space $\cM$ of the dynamical billiard on the table $Q$ consists of tangent vectors
$(q,v)$, where $q\in \gamma$, and $v\in T_q Q$ is a unit vector pointing to the interior of $Q$.
Let $s$ be the arc-length parameter of $\gamma$ oriented counterclockwise, and  $\theta \in [0,\pi]$ be the angle between $v$ and $\dgs$.
Then each point $(p,v)$ can be expressed in terms of a pair of new coordinates $(s,\theta)$. Let $\ell_\gamma$ be the length of $\gamma$. Under the assumption that $Q$ is strictly convex, this gives rise to a new coordinate system on the phase space: $\mathcal{M} = \{(s,\theta): 0 \leq s \leq \ell_\gamma,\ 0 \leq \theta \leq \pi \}/\sim$ with the identification of the endpoints $s=0$ and $s=\ell_{\gamma}$. That is, the phase space $\mathcal{M}$ can be viewed as a cylinder $\bR/\ell_{\gamma} \times [0,\pi]$.

Let $(q_0,v_0) \in \mathcal{M}$ be the initial position and direction of the billiard ball. 
Let $q_1 \in \gamma$ be the first point of reflection, so that at $q_1$, the billiard changes direction from $v_0$ to $v_1$. Using their corresponding coordinates $(s_i, \theta_i)$, $i=0, 1$, we can thus define the  billiard map $T:\mathcal{M}\rightarrow \mathcal{M}$, $(s_0, \theta_0) \mapsto (s_1, \theta_1)$. 
For strictly convex billiard tables (even in $\bR^n$) whose boundary is $C^k$ for $k \geq 2$, then the billiard map $T$ is of class $C^{k-1}$. In particular, if the boundary is smooth, then $T$ is as well. By defining the billiard map, we can convert our billiard model to a discrete dynamical system on the cylinder  $\bR/\ell_{\gamma} \times [0,\pi]$.
Let $L = L(s,s_1)$ denote the distance between two points $\gamma(s)$ and $\gamma(s_1)$, and let $\kappa(s)$ and $\kappa(s_1)$ be the signed curvatures of $\gamma$ at $s$ and $s_1$ respectively.

\begin{proposition} \label{pro:DT.euclid}
The billiard map $T$ preserves the area form $\omega = \sin\theta ds \wedge d\theta$ on the phase space $\cM$.
Moreover,  the differential of the billiard map  $T: (s, \theta) \mapsto (s_1, \theta_1)$ is given by 
	\begin{equation}\label{eq: DpT}
		D_{(s,\theta)}T = \frac{1}{\sin\theta_1}
		\begin{bmatrix}
			L\kappa(s)\kappa(s_1) & L \\
			L\kappa(s)\kappa(s_1) - \kappa(s_1)\sin\theta - \kappa(s)\sin\theta_1 & \qquad L\kappa(s_1)-\sin\theta_1
		\end{bmatrix}. 
	\end{equation}
\end{proposition}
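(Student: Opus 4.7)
The plan is to treat both assertions through the classical generating function $L(s,s_1) = |\gamma(s_1) - \gamma(s)|$, the chord length between two points on the boundary. The first step is to derive the identities
\begin{equation*}
\frac{\partial L}{\partial s}(s,s_1) = -\cos\theta, \qquad \frac{\partial L}{\partial s_1}(s,s_1) = \cos\theta_1
\end{equation*}
along the trajectory. These come from differentiating $L^2 = \langle \gamma(s_1)-\gamma(s),\gamma(s_1)-\gamma(s)\rangle$ and using the orthonormal decomposition of the unit direction of travel $v = (\gamma(s_1)-\gamma(s))/L$ in the frames $\{\gamma'(s),n(s)\}$ and $\{\gamma'(s_1),n(s_1)\}$, where $n$ denotes the inward unit normal. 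Once these hold, applying $d$ to $dL = -\cos\theta\,ds + \cos\theta_1\,ds_1$ and using $d^2L = 0$ gives
\begin{equation*}
\sin\theta\,ds\wedge d\theta = \sin\theta_1\,ds_1\wedge d\theta_1,
\end{equation*}
which is exactly $\mathcal{T}^\ast\omega = \omega$.

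For the differential, I would treat the defining relations $\cos\theta = -L_s(s,s_1)$ and $\cos\theta_1 = L_{s_1}(s,s_1)$ as implicit equations determining $s_1$ and $\theta_1$ as functions of $(s,\theta)$. Differentiating each relation with respect to $s$ and $\theta$ separately, and solving the resulting linear systems, yields the four entries of $D\mathcal{T}$ expressed as rational combinations of the second partials $L_{ss}$, $L_{s s_1}$, and $L_{s_1 s_1}$:
\begin{equation*}
\frac{\partial s_1}{\partial s} = -\frac{L_{ss}}{L_{ss_1}},\quad \frac{\partial s_1}{\partial \theta} = \frac{\sin\theta}{L_{ss_1}},\quad \frac{\partial \theta_1}{\partial s} = \frac{L_{ss}L_{s_1s_1}-L_{ss_1}^2}{L_{ss_1}\sin\theta_1},\quad \frac{\partial \theta_1}{\partial \theta} = -\frac{L_{s_1s_1}\sin\theta}{L_{ss_1}\sin\theta_1}.
\end{equation*}

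The remaining task is to evaluate the three second partials in geometric terms. Using $\gamma''(s) = \kappa(s)n(s)$ together with $\langle n(s),u\rangle = L\sin\theta$ and $\langle n(s_1),u\rangle = -L\sin\theta_1$, a direct calculation gives
\begin{equation*}
L_{ss} = -\kappa(s)\sin\theta + \frac{\sin^2\theta}{L}, \qquad L_{s_1 s_1} = -\kappa(s_1)\sin\theta_1 + \frac{\sin^2\theta_1}{L}.
\end{equation*}
Substituting everything into the formulas for the Jacobian entries and simplifying produces the claimed matrix. I expect the main obstacle to be the computation of the mixed partial $L_{ss_1}$: unlike $L_{ss}$ and $L_{s_1s_1}$, it involves $\langle \gamma'(s),\gamma'(s_1)\rangle$, which has no direct geometric meaning in terms of $(s,s_1,\theta,\theta_1)$. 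The way around this is to determine the angle between the two boundary tangents by resolving $v$ in each of the frames $\{\gamma'(s),n(s)\}$ and $\{\gamma'(s_1),n(s_1)\}$; comparing the two expressions gives $\langle \gamma'(s),\gamma'(s_1)\rangle = \cos(\theta+\theta_1)$, from which the product-to-sum identity collapses $L_{ss_1}$ to the clean form $\sin\theta\sin\theta_1/L$. A sanity check that $\det D\mathcal{T} = \sin\theta/\sin\theta_1$, consistent with the first part of the proposition, will confirm no arithmetic errors along the way.
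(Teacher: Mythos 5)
Your proposal is correct and takes essentially the same route as the paper: symplecticity from $d^2L=0$ applied to $dL=-\cos\theta\,ds+\cos\theta_1\,ds_1$, and the Jacobian obtained by differentiating the relations $\cos\theta=-L_s$, $\cos\theta_1=L_{s_1}$ using the same geometric inputs ($\gamma''(s)=\kappa(s)n(s)$, $\langle u,n(s)\rangle=L\sin\theta$, and $\langle\gamma'(s),\gamma'(s_1)\rangle=\cos(\theta+\theta_1)$ collapsing $L_{ss_1}$ to $\sin\theta\sin\theta_1/L$) --- your packaging via the second partials $L_{ss}$, $L_{ss_1}$, $L_{s_1s_1}$ and implicit differentiation is just a notational reorganization of the paper's direct computation of $d(\cos\theta)$ and $d(\cos\theta_1)$. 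One remark: your entry $\partial s_1/\partial s=\bigl(L\kappa(s)-\sin\theta\bigr)/\sin\theta_1$ agrees with what the paper's own proof derives, so the $(1,1)$ entry $L\kappa(s)\kappa(s_1)$ in the displayed matrix of the proposition is a typo in the statement, and your sanity check $\det D\mathcal{T}=\sin\theta/\sin\theta_1$ is exactly the test that detects it.
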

See \cite[Section 2.11]{ChMa} for more details. Our proof below is slightly different, which provides the setup that will be used again to derive the tangent map of Minkowski billiards,
see Section~\ref{sec.su} and Proposition~\ref{pro.DT.Minkowski} for more details.

\begin{proof}
	First note that $\sin\theta > 0$ so that $\omega$ is in fact an area form on $\cM$. Let $\gamma(s)$ and $\gamma(s_1)$ be distinct points on the boundary of the billiard table, and $L=L(s,s_1)$ be as above. Then $\frac{\partial L}{\partial s} = -\cos\theta$ and $\frac{\partial L}{\partial s_1} = \cos\theta_1$. It follows that 	
	\begin{align}
		dL &= \frac{\partial L}{\partial s}ds + \frac{\partial L}{\partial s_1}ds_1 = -\cos\theta ds + \cos\theta_1 ds_1, \label{eq.dLa}\\
		d^2L &= \sin\theta \ d\theta \wedge ds - \sin\theta_1 \ d\theta_1 \wedge ds_1 = 0. \nonumber
	\end{align}
This proves the first part of the theorem. 

To compute the differential $DT$, first note that since $\gamma$ is parameterized by arc-length, $\dgs \cdot \left(\gamma(s_1)-\gamma(s) \right) = L(s,s_1)\cos\theta$ and $\dgso \cdot \left(\gamma(s_1)-\gamma(s) \right) = L(s,s_1)\cos\theta_1$. Hence	
	\begin{align}
		\cos\theta &= \frac{1}{L(s,s_1)}\left(\gamma(s_1)-\gamma(s)\right)\cdot\dgs, \label{eq: costheta} \\
		\cos\theta_1 &= \frac{1}{L(s,s_1)}\left(\gamma(s_1)-\gamma(s)\right)\cdot\dgso. \label{eq: costheta1}
	\end{align}
	
Taking the differential of equation (\ref{eq: costheta}) yields 	
	\begin{align*}
		-\sin\theta d\theta &= \bigg[-\frac{1}{L^2}\frac{\partial L}{\partial s} \left(\gamma(s_1)-\gamma(s)\right) \cdot \dgs + \frac{1}{L} \bigg( \left( \gamma(s_1)-\gamma(s) \right)\cdot \ddgs -\dgs \cdot \dgs  \bigg) \bigg]ds   \\[5pt]
		&\qquad  + \bigg[ -\frac{1}{L^2}\frac{\partial L}{\partial s_1} \left(\gamma(s_1)-\gamma(s)\right) \cdot \dgs + \bigg]ds_1  \\[5pt]
		&= \left[\frac{\cos^2\theta-1}{L} + \kappa(s)\sin\theta \right]ds + \left[\frac{-\cos\theta\cos\theta_1}{L} + \frac{\cos(\theta + \theta_1)}{L}\right]ds_1  \\[5pt]		
		&= \left[\frac{-\sin^2\theta}{L} + \kappa(s)\sin\theta \right]ds - \frac{\sin\theta\theta_1}{L}ds_1. 
	\end{align*}
Solving for $\sin\theta_1ds_1$ gives
\begin{equation}
\sin\theta_1ds_1 = \left[L \kappa(s)-\sin\theta\right]ds + L d\theta. \label{eq: sintheta1}
\end{equation}
It follows that	
	\begin{align*}
		\frac{\partial s_1}{\partial s} &= \frac{L\kappa(s)-\sin\theta}{\sin\theta_1},\\[5pt]
		\frac{\partial s_1}{\partial \theta} &= \frac{L}{\sin\theta_1}.
	\end{align*}
	
Similarly, taking the differential of equation (\ref{eq: costheta1}) yields	
	\begin{align}
		-\sin\theta_1 d\theta_1 &= \left[\frac{\cos\theta\cos\theta_1}{L} - \frac{\cos(\theta+\theta_1)}{L}\right]ds + \left[\frac{1-\cos^2\theta_1}{L}-\kappa(s_1)\sin\theta_1 \right]ds_1.
	\end{align}
Simplifying the equation, we have
	\begin{align*}
		\sin\theta_1d\theta_1 = -\frac{\sin\theta\sin\theta_1}{L}ds + \left[\kappa(s_1)-\frac{\sin\theta_1}{L}\right]\sin\theta_1ds_1. 
	\end{align*}
Plugging in equation (\ref{eq: sintheta1}) into the above equation, we get 	
\begin{equation*}
		\sin\theta_1d\theta_1 = \bigg[L \kappa(s)\kappa(s_1)-\kappa(s_1)\sin\theta - \sin\theta_1\kappa(s)\bigg]ds + \bigg[\kappa(s_1)L-\sin\theta_1\bigg]d\theta.
	\end{equation*}
 It follows that	
	\begin{align*}
		\frac{\partial \theta_1}{\partial s} &= \frac{1}{\sin\theta_1}\bigg[L\kappa(s)\kappa(s_1) - \kappa(s_1)\sin\theta - \kappa(s)\sin\theta_1 \bigg],  \\[5pt]
		\frac{\partial \theta_1}{\partial \theta} &= \frac{1}{\sin\theta_1}\bigg[L\kappa(s_1)-\sin\theta_1\bigg].
	\end{align*}
Collecting terms, we complete the proof of the proposition.	
\end{proof}

Here we consider another variable $u= - \cos\theta$, which gives rise to an equivalent coordinate system of the phase space $\cM$ via $\bR/\ell_{\gamma} \times [-1, 1]$.
In particular, the corresponding area form $\omega$ on $\cM$  becomes the standard area:
\begin{align}
\omega = \sin\theta ds \wedge d\theta = ds \wedge du, \label{eq.Tsymp}
\end{align}
and Eq.~\eqref{eq.dLa} becomes 
\begin{align}
		dL &=\ -\cos\theta ds + \cos\theta_1 ds_1
		= u ds - u_1 ds_1. \label{eq.dLu}
\end{align}
In this way, the function $L=L(s,s_1)$  is a generating function of the billiard map $T: (s, u)\mapsto (s_1, u_1)$ in the sense that
\begin{align}
		u = \frac{\pa L}{\pa s}(s, s_1), \quad u_1 = -\frac{\pa L}{\pa s_1}(s, s_1). \label{eq.def.u}
\end{align}
This change is necessary when studying Minkowski billiards since the angles are no long defined.

\subsection{Stability of periodic billiard orbits} 

Now that we have introduced some basic properties of dynamical billiards, it is only natural to ask whether periodic orbits exist, and if so study their properties. Despite its simplicity, this is a surprisingly deep question. For example, in 1775 Fagnano proved that when the billiard table is an acute triangle, there always exists a periodic orbit - namely the \textit{orthic triangle}. However, the case of an obtuse triangle has proven to be much more elusive. It wasn't until the last fifteen years that \cite{Sch09} proved the existence of periodic orbits when the billiard table is a triangle whose angles are all at most one hundred degrees. 

In contrast, Birkhoff \cite{Bi27} showed that every convex billiard table with a sufficiently smooth boundary has many periodic orbits. Given a periodic orbit a natural question to ask is: What if we vary the initial conditions (maybe ever so slightly)? Do we obtain a trajectory that closely resembles the periodic orbit, or do we get something vastly different? And if the result is vastly different, is it at least in some sense 'predictable', or does it appear to be totally random? 
So the first step is to classify the periodic orbit according to the eigenvalues of the tangent map iterated up to period, see Section \ref{sec.dysy}.
It follows from the Hartman-Grobman theorem that  hyperbolic fixed points have the stable and unstable manifolds on the phase space and are locally unstable. On the other hand, the geometric characterizations for general elliptic fixed points can be very different. Given an elliptic fixed point $x^*$, the linearization  $D_{x^*}f$ acts as a rigid rotation around the fixed point. However, the local dynamics around an elliptic fixed point can be very rich for the underlying map $f$. This is in contrast to the hyperbolic case where $f$ and $D_pf$ always posses the similar dynamic behaviors. In this section we give sufficient conditions for when the elliptic islands are inherited by $f$. For more details, see the classic text \cite{SiMo}.

\begin{definition}
	A fixed point $x^*$ of  $f: U \subseteq \bR^2 \rightarrow \bR^2$ is said to be \textit{Moser stable} or \textit{nonlinearly stable} if there exists a nested sequence of $f$-invariant neighborhoods $U_n$, $n\ge 1$, containing $x^*$, whose boundaries $\partial U_n$ are invariant circles and $\bigcap\limits_{n\geq1} U_n = \{x^*\}$. 
\end{definition}

The definition of nonlinearly stable elliptic periodic points of period $n$ can be done using the iterate $f^n$.
Note that elliptic periodic points are not always nonlinearly stable and there are cases for which  elliptic periodic points are not surrounded by any invariant circle. 
As an example we consider lemon billiards first introduced in \cite{He93}. Let $Q(L)$ denote the class lemon billiard tables formed by intersecting two unit circles where $\ell$ represents the distance between the two centers. Denote by $\mathcal{O}_2(L) = \{P, \cT(P)\}$ the periodic 2-orbit reflecting between $\gamma_0(0)$ and $\gamma_1(0)$ (See Figure \ref{fig.lemon}). It is shown in \cite{JZ22} that $\mathcal{O}_2(L)$ is elliptic for $L \in (0,2)$, and nonlinearly stable for $L \in (0,2) \backslash \{1\}$. However, numerical calculations in \cite{Ch13} appear to demonstrate that the billiard map is ergodic on $Q(1)$ so that in particular, $\mathcal{O}_2(1)$ is not nonlinearly stable.

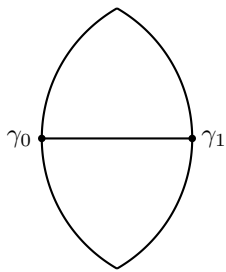
\begin{figure}[htbp]
\tikzmath{
\r=2;
\a=sqrt(3);
}
\begin{tikzpicture}
\draw[domain=-60:60, thick , samples=100] plot ({\r*cos(\x)}, {\r*sin(\x)});
\draw[domain=120:240, thick, samples=100] plot ({\r+\r*cos(\x)}, {\r*sin(\x)});
\fill (0,0) node[left]{$\gamma_{0}$} circle[radius=0.05];
\fill (2,0) node[right]{$\gamma_{1}$} circle[radius=0.05];
\draw[thick] (0,0) -- (2, 0);
\end{tikzpicture}
\caption{The lemon table $Q(1)$.}\label{fig.lemon}
\end{figure}

\subsection{Birkhoff normal form and Moser's twist mapping theorem}\label{sec.BNF}
Let $f: U\to \bR^2$ be a symplectic embedding and $P=(0,0)$ be an elliptic fixed point of $f$.
That is, the eigenvalues of the tangent matrix $D_P f$  satisfies $|\lambda| =1$ and $\lambda \neq \pm 1$.
Then the point $P$ is said to be non-resonant if $\lambda^n \neq  1$ for any $n\ge 3$.
For any $N\ge 1$, there exists an area preserving change of coordinates of the form
\begin{align} 
h_N: U \rightarrow \bR^2, \quad 
		\begin{bmatrix}
			x \\
			y
		\end{bmatrix}
		\mapsto  
		\begin{bmatrix}
			x + p_2(x,y) + \cdots + p_{2N+1}(x,y) \\
			y + q_2(x,y) + \cdots + q_{2N+1}(x,y)
		\end{bmatrix}
		+ \text{h.o.t} \label{eq: Birkhoff transformation}
\end{align}
where $p_j$ and $q_j$ are $j^\text{th}$ degree polynomials for every $j=2,3,...,2N+1$, under which one can express
\begin{equation}
		h_N^{-1}\circ f \circ h_N \left( \begin{bmatrix}
			x \\
			y
		\end{bmatrix} \right) = 
		\begin{bmatrix}
			\cos\Theta(r^2) & -\sin\Theta(r^2) \\
			\sin\Theta(r^2) & \cos\Theta(r^2)
		\end{bmatrix}
		\begin{bmatrix}
			x \\ y
		\end{bmatrix}
		+ \text{h.o.t.} \label{eq: BNF}
\end{equation}
where $r^2 = x^2 + y^2$, 
	\begin{equation}
		\Theta(r^2) = \theta + \tau_1 r^2 + \tau_2 r^4 + \cdots + \tau_N r^N, \label{eq: Theta}
	\end{equation}
and h.o.t stands for "higher order terms". \label{prop: BNF}
The symplectic transformation (\ref{eq: Birkhoff transformation}) is called the $N$-th order \textit{Birkhoff transformation}, (\ref{eq: BNF}) is called the $N$-th order \textit{Birkhoff normal form} of the map $f$ around the elliptic fixed point $P$, and 
the coefficient $\tau_j$ is called the \textit{$j$-{th} twist coefficient} (or Birkhoff coefficient) of $f$ at $P$. 
The cases such that $\lambda^n = 1$ for some $n\ge 3$ are called \textit{resonances}.
 The name "twist coefficient" is given because geometrically, $\Theta(r^2)$ defined in (\ref{eq: Theta}) measures the amount of rotation about the fixed point. 
 The same results hold for elliptic periodic orbits. See \cite{SiMo, Mos73} for more details. 
Moreover, Moeckel \cite{Moe90} studied the behavior of the first twist coefficient near an elliptic fixed point for a one-parameter family of area-preserving diffeomorphisms and gave a method for calculating $\tau_1$. These twist coefficients play an important role when determining the nonlinear stability of the elliptic fixed points.

\begin{theorem}[Moser's twist mapping theorem]\label{thm: Moser twist}
	If for an area-preserving map of the form (\ref{eq: BNF}), the polynomial $\Theta(r^2)$ defined in (\ref{eq: Theta}) is not identically zero, then the elliptic fixed point $P$ is nonlinearly stable. 
\end{theorem}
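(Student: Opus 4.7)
The plan is to deduce the statement from Moser's invariant curve theorem for area-preserving twist maps on the annulus. After applying the Birkhoff transformation $h_N$ from (\ref{eq: Birkhoff transformation}), the conjugated map $h_N^{-1}\circ f\circ h_N$ equals a rotation by angle $\Theta(r^2)$ composed with a remainder of size $O(r^{2N+2})$. Passing to polar coordinates $x=r\cos\phi$, $y=r\sin\phi$ puts the normalized map in the form
\[
(r,\phi)\ \mapsto\ \bigl(r + R(r,\phi),\ \phi + \Theta(r^2) + \Phi(r,\phi)\bigr),
\]
with $R,\Phi=O(r^{2N+2})$. Since $\Theta(r^2)$ is nonconstant in $r$ by hypothesis, there is a smallest index $j_0\in\{1,\dots,N\}$ with $\tau_{j_0}\ne 0$, so the angular component has genuine $r$-dependence.

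Next I would localize and rescale. On the annulus $A_\varepsilon=\{\varepsilon\le r\le 2\varepsilon\}$ the substitution $r=\varepsilon\rho$ transports the map to the fixed annulus $\{1\le\rho\le 2\}$, where it appears as a perturbation of the pure twist $(\rho,\phi)\mapsto(\rho,\phi+\Theta(\varepsilon^2\rho^2))$. The leading $\rho$-dependent piece of the rotation angle is $\tau_{j_0}\varepsilon^{2j_0}\rho^{2j_0}$, so the effective twist on the unit annulus has modulus of order $\varepsilon^{2j_0}$, while the rescaled remainder is of order $\varepsilon^{2N+2-2j_0}$ relative to the twist. Choosing $N$ large enough that $2N+2-2j_0$ exceeds the smallness threshold in Moser's twist theorem, the hypotheses of that theorem are met on every $A_\varepsilon$ with $\varepsilon$ sufficiently small. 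It then produces a smooth $f$-invariant closed curve $C_\varepsilon\subset A_\varepsilon$ enclosing $P$ with Diophantine rotation number.

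Taking a sequence $\varepsilon_n\searrow 0$, one obtains invariant Jordan curves $C_{\varepsilon_n}$ accumulating at $P$. Let $U_n$ denote the bounded component of $\bR^2\setminus C_{\varepsilon_n}$. Since $f$ is an orientation-preserving homeomorphism with invariant Jordan curve $C_{\varepsilon_n}$, the disk $U_n$ is $f$-invariant. The sequence can be chosen so that $U_{n+1}\subset U_n$ and $\bigcap_n U_n=\{P\}$, which is exactly the definition of Moser (nonlinear) stability.

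The main obstacle is the balance in the rescaling step: the twist modulus $\tau_{j_0}\varepsilon^{2j_0}$ itself degenerates as $\varepsilon\to 0$, and one must verify that the quantitative smoothness-versus-smallness hypotheses of Moser's twist theorem survive this degeneration. This is controlled by taking the Birkhoff normalization order $N$ large depending on the $C^k$ threshold required by the version of Moser's theorem invoked, and by appealing to the nonresonance assumption from Section~\ref{sec.BNF} so that sufficiently many Diophantine rotation numbers are realized on every sufficiently small annulus.
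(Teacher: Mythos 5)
The paper offers no proof of this theorem to compare against: it is quoted as a classical result of Moser, with the proof delegated to \cite{SiMo, Mos73}. Your sketch is, in substance, the standard derivation found in those references, and its architecture is correct: normalize to order $N$, pass to polar coordinates, rescale the annulus $A_\varepsilon=\{\varepsilon\le r\le 2\varepsilon\}$ to a fixed annulus, note that the effective twist is of order $\varepsilon^{2j_0}$ (with $j_0$ the first index with $\tau_{j_0}\neq 0$) while the rescaled remainder is of higher order, invoke Moser's invariant curve theorem on each $A_{\varepsilon_n}$, and pass from the resulting invariant Jordan curves to nested invariant disks shrinking to $P$ --- which is exactly the paper's definition of nonlinear stability. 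A few small calibrations: (i) the angular remainder in polar coordinates is $O(r^{2N+1})$, not $O(r^{2N+2})$, since the Cartesian error $O(r^{2N+2})$ gets divided by $r$; this is harmless, as the remainder-to-twist ratio is then $O(\varepsilon^{2N+1-2j_0})$, which already tends to $0$ for $N=j_0$ --- the genuine role of taking $N$ large is, as you say at the end, to clear the $C^k$ smallness threshold demanded by the version of the invariant curve theorem being used, with the twist degenerating like $\varepsilon^{2j_0}$. (ii) The nonresonance condition $\lambda^n\neq 1$ is consumed earlier, in constructing the Birkhoff transformation $h_N$ itself (cf.\ the denominators $1-\lambda^2$, $1-\overline{\lambda}^4$ appearing in Section~\ref{ss.killing} of the paper); the existence of Diophantine rotation numbers in the swept interval of length $\sim|\tau_{j_0}|\varepsilon^{2j_0}$ is automatic and needs no resonance hypothesis, so your final appeal to nonresonance for that purpose is misplaced though inconsequential. (iii) One should also read the hypothesis as saying the twist part $\tau_1 r^2+\cdots+\tau_N r^{2N}$ is not identically zero (a constant nonzero $\Theta\equiv\theta$ gives no twist), which is how you correctly interpreted it. With these minor repairs your argument is a faithful reconstruction of the classical proof the paper cites.
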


There have been many applications of Moser's Twist Mapping Theorem in the study of  dynamical billiards, see \cite{KP01, DKP03, BuGr, XiZh14} and the references therein. In \cite{KP05} the authors obtained an explicit formula of the first twist coefficients of elliptic $2$-periodic orbits in terms of the geometric characterizations of the billiard table. In \cite{JZ22} the authors studied the Birkhoff normal form around elliptic periodic points for a large class of billiards and used it to give explicit formulas for the first two twist coefficients in terms of the geometric parameters of the billiard table. The authors were then able to apply the formulas of the twist coefficients to obtain characterizations of the nonlinear stability and local analytic integrability of billiards around elliptic periodic points.

\section{Minkowski Billiards}

Minkowski billiards and Finsler billiards were introduced by Gutkin and Tabachnikov in \cite{GT02} as a natural generalization of Euclidean billiards that use a Finsler structure to represent a billiard traveling on an anisotropic and inhomogeneous medium. On Euclidean billiard systems, a billiard travels with uniform motion at every point on the interior of the billiard table -- a consequence of the indicatrix of the Euclidean metric being a unit circle at each point. However, the indicatrix of a Finsler structure is  not necessarily round or symmetric, but only  a strictly convex curve that may vary from point to point. Hence the motion of the billiard depends on its location and direction. In the remainder of this section we will restate the reflection law for Minkowski billiard systems given the lack of angles in Minkowski geometry, and prove that the Minkowski billiard map is symplectic  under an appropriate set of coordinates. 

\subsection{Reflection laws in Minkowski spaces}\label{ss.reflectionM} 
In Euclidean billiard systems, the reflection law is that the angle of incidence equals the angle of reflection. However, because Minkowski structures are not induced by an inner product, Minkowski geometry lacks the notion of angles. Hence to work with billiards in the Minkowski setting, a reflection law must be specified.  

Let $(\bR^2, F)$ be a Minkowski space,
$Q\subset \bR^2$ be a bounded and strictly convex domain with (piecewise) smooth boundary $\partial Q$. As usual, $Q$ will be called the billiard table and $\pa Q$ will be parameterized by a piece-wise smooth curve $\gamma(s)$ (counterclockwise) with $F(\dot\gamma(s)) =1$ when it exists. Further, let $y_0$ and $y_1$ be two points in the interior of $Q$ and $x$ be a point on the boundary of $Q$. Then the Minkowski distances from $y_0$ to $x$ and from $x$ to $y_1$ are given as $F(x-y_0)$ and $F(y_1-x)$ respectively. Then $xy_1$ is the reflected ray of $y_0x$ if and only if $x$ is a critical point of the distance function $F(x-y_0) + F(y_1-x)$. In \cite{GT02} Gutkin and Tabachnikov established the reflection law for Finsler billiards using the geometry of the indicatrix as follows:

\begin{proposition}[Finsler reflection law]\label{pro.Finsler.reflection.law}
	Let $y_0,y_1 \in Q$, and $x \in \pa Q$ be as above so that $xy_1$ is the billiard reflection of $y_0x$, and let $u,v \in I_x$ be the $F$-unit vectors traveling along $y_0x$ and $xy_1$ respectively. Then $\mathcal{L}(v)-\mathcal{L}(u)$ is conormal to $T_x \partial Q$ i.e. $\left(\mathcal{L}(v)-\mathcal{L}(u)\right)(w) = 0 $ for $w \in T_x \partial Q$. 
\end{proposition}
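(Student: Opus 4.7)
The plan is to exploit the variational characterization stated just before the proposition: the reflection point $x \in \partial Q$ is by definition a critical point of the length functional $f(x) = F(x - y_0) + F(y_1 - x)$ restricted to the constraint $x \in \partial Q$. Thus the conclusion will follow if I can compute $df_x$ on tangent vectors $w \in T_x\partial Q$ and recognize the resulting covector as $\mathcal{L}(u) - \mathcal{L}(v)$.

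The technical heart of the proof is the identification $dF_w = \mathcal{L}(w/F(w))$ for every nonzero $w \in \bR^2$. I would establish this in two short steps. First, positive homogeneity $F(cw) = cF(w)$ gives, by differentiation in $w$, that $dF_{cw} = dF_w$ for $c>0$, so $dF_w = dF_u$ where $u = w/F(w) \in I$; differentiating the same identity in $c$ at $c=1$ yields Euler's relation $dF_u(u) = F(u) = 1$. Second, since $F \equiv 1$ on the indicatrix $I$, the covector $dF_u$ vanishes on $T_uI$. These two properties $\ker(dF_u) = T_uI$ and $dF_u(u) = 1$ are exactly the defining conditions of $\mathcal{L}(u)$, so $dF_w = \mathcal{L}(u)$, as claimed.

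With this identification in hand, the remainder is a direct chain rule. Writing $u = (x - y_0)/F(x-y_0)$ and $v = (y_1 - x)/F(y_1 - x)$, for any $w \in T_x \partial Q$ I compute
\begin{equation*}
df_x(w) = dF_{x-y_0}(w) + dF_{y_1-x}(-w) = \mathcal{L}(u)(w) - \mathcal{L}(v)(w).
\end{equation*}
The critical-point condition $df_x(w) = 0$ for all such $w$ becomes $\bigl(\mathcal{L}(v) - \mathcal{L}(u)\bigr)(w) = 0$ for every $w \in T_x\partial Q$, which is the asserted conormality. Strict convexity of $F$ (the positive-definiteness of the Hessian) guarantees that the Legendre transform is well-defined and that $u, v$ lie on the smooth indicatrix so that $T_uI$ and $T_vI$ are genuine hyperplanes.

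The main obstacle, and really the only nontrivial step, is the homogeneity/Euler argument identifying $dF_w$ with the Legendre transform of its normalization; once this lemma is in hand the rest is a one-line application of the variational principle. I would present that lemma as a small standalone computation, emphasizing that it is exactly the place where the peculiarities of the Minkowski structure (no inner product, but a well-behaved indicatrix with tangent hyperplanes) take the place of the Euclidean orthogonality argument used for the classical reflection law.
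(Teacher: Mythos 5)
Your proof is correct: the homogeneity argument is sound (differentiating $F(cw)=cF(w)$ in $w$ gives degree-zero homogeneity of $dF$, differentiating in $c$ gives Euler's relation $dF_u(u)=1$, and constancy of $F$ on $I$ together with $dF_u\neq 0$ forces $\ker dF_u = T_uI$, so $dF_u=\mathcal{L}(u)$ by the paper's definition of the Legendre transform), and the first-variation chain-rule computation then turns the critical-point characterization of reflection directly into the asserted conormality. The paper itself gives no proof of this proposition --- it is quoted from \cite{GT02} --- and your argument is essentially the standard first-variation proof from that source, so it matches the intended approach.
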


Given an incident billiard trajectory, we can use Proposition \ref{pro.Finsler.reflection.law} to find the reflected trajectory in the following manner:
\begin{enumerate}
\item $T_uI_x$ is not parallel to $T_x\partial Q$: the line $T_uI_x$ intersects  $T_x\partial Q$
at a unique point, say $w \in T_x\partial Q$. The Finsler reflection law states that the reflected vector $v \in T_xI$  satisfies $\mathcal{L}(u)(w) = \mathcal{L}(v)(w) = 1$. Therefore, so $w \in T_vI_x$. This  determines $v$, see Fig.~\ref{fig.F.reflection}.

\item $T_uI_x$ is parallel to $T_x\partial Q$:  $\mathcal{L}(u)$ is proportional to a covector that is conormal to $T_x\partial Q$. The  Finsler reflection law states that $\mathcal{L}(v)$, where $v$ is the reflected ray, is also proportional to $\mathcal{L}(u)$. Hence $T_vI_x$ is parallel to $T_uI_x$ thus determining the reflected vector $v$. 
\end{enumerate}
It is important to note that Finsler billiards are generally irreversible, i.e. given an incident and reflected billiard trajectory, we cannot reverse their roles unless the norm $F$ reversible: $F(x,-\tilde{v}) = F(x,\tilde{v})$ for $\tilde{v} \in T_xM$. It is clear that the reflection is that of equal angles when the indicatrix is a circle centered at the origin.

\begin{figure}[htbp]
\tikzmath{
\a=18;
\b=155;
}
\begin{tikzpicture}[scale=1.5]
\draw [domain=0:360, samples=100] plot ({2*cos(\x) + 1}, {0.5*sin(\x)}) node[yshift=-0.2in]{$I_x$};
\draw [thick, domain=-0.6:0.7, samples=100] plot ({\x}, {\x-\x*\x}) node[xshift=-0.6in,yshift=-0.6in]{$\pa Q$};
\draw (-1,-1) -- (2,2);
\draw ({2*cos(\a) + 1}, {0.5*sin(\a)}) -- (1.5,1.5) node[pos=0.5, above right]{$T_v I_x$} node(w){} -- ({2*cos(\b) + 1}, {0.5*sin(\b)}) node[pos=0.5, above left]{$T_u I_x$};
\draw ({2*cos(\a) + 1}, {0.5*sin(\a)}) node(v){} -- (0,0) node[pos=0.03, sloped]{$>$} node[pos=0.5](A){}; 
\draw[dashed] ({2*cos(\b) + 1}, {0.5*sin(\b)}) node(u){} -- (0,0) node[pos=0.1, sloped]{$<$} node[pos=3](B){}; 
\draw[thick] (0,0) -- (A) node[pos=0.5, sloped]{$>$};
\draw[thick] (0,0) -- (B) node[pos=0.5, sloped]{$<$};
\fill (0,0) circle(0.06) node[below]{$x$};
\fill (u) circle(0.05) node[left]{$u$};
\fill (v) circle(0.05) node[right]{$v$};
\fill (w) circle(0.05) node[left]{$w$};
\end{tikzpicture}
\caption{An illustration of the Finsler reflection law.}\label{fig.F.reflection}
\end{figure}
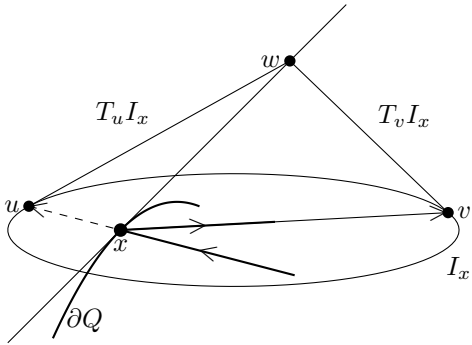

\subsection{New coordinate system on $\cM$ for Minkowski billiards}\label{sec.su}
Let $(\bR^2, F)$ be a Minkowski plane, $Q\subset \bR^2$ be a strictly convex domain with (piecewise) smooth boundary. The phase space $\mathcal{M}$ of the Minkowski billiard on $Q$ is given by all points $(x, v)$ where $x \in \partial Q$, and $v \in I_x$  points to the interior of $Q$, as we have had for Euclidean billiards. The Minkowski billiard map $\mathcal{T}$ is defined using the new reflection law in Proposition \ref{pro.Finsler.reflection.law}. That is, let $x_0 = \gamma(s_0)$ and $x_1 = \gamma(s_1)$ be points on $\partial Q$, $v_0 \in I_{x_0}$ be the $F$-unit vector at $x_0$ pointing to $x_1$, and $v_1 \in I_{x_1}$ the $F$-unit vector pointing in the direction of the reflected billiard trajectory. Then the Minkowski billiard map is $\mathcal{T}: \mathcal{M} \rightarrow \mathcal{M}$, $(s_0,v_0) \mapsto  (s_1,v_1)$.

Next we introduce a new variable $u=u(s, s_1)$ and a new coordinate system $(s,u)$ on $\cM$ for the billiard map $\cT$ using the generating function method mimicking what has been done in Eq.~\eqref{eq.def.u}. 
\begin{definition}\label{def.new.variable}
Let $Q \subset (\bR^2, F)$ be a compact and convex domain with piecewise smooth boundary $\pa Q$,
$\gamma(s)$ be a parametrization of $\pa Q$ with the $F$-arc-length parameter $s$ in the sense that $F(\dot \gamma(s)) =1$, $L(s,s_1) = F(\gamma(s_1)-\gamma(s))$  the Minkowski length from $\gamma(s)$ to $\gamma(s_1)$. Then the following defines  a new variable 
\begin{align}
u = u(s, s_1) := \frac{\partial L}{\partial s}(s,s_1). \label{def.u}
\end{align}
\end{definition}

It is easy to see that this introduces a new coordinate system $(s,u)$ on $\cM$.
Note that the lower and upper bounds of $u$ depend on the first coordinate $s$, and hence the phase space $\cM$ becomes a cylinder with varying lower and upper boundaries.
Applying the above definition to the next iterate of the billiard orbit, we get that $u_1 =u(s_1,s_2)$. Then the Minkowski billiard map can be rewritten using the new coordinate as $\mathcal{T}: (s,u) \mapsto (s_1,u_1)$. 

\begin{remark}
It is worth pointing out that the second identity in Eq.~\eqref{eq.def.u} should not be interpreted as the definition of the coordinate $u_1$. Rather, it represents a property of the Euclidean billiard map. We will verify that the same identity does hold for Minkowski billiard maps, see \eqref{eq.verify.u1} in the proof of Proposition \ref{pro.DT.Minkowski}.
\end{remark}

\subsection{The tangent map $D\cT$} 
With these new coordinates we can derive the tangent maps for Minkowski billiards as we have done in Proposition \ref{pro:DT.euclid} for Euclidean billiards. 

\begin{proposition} \label{pro.DT.Minkowski}
	The Minkowski billiard map $\mathcal{T}$ preserves the area form $\omega = ds \wedge du$. Moreover, the tangent map
	$D_{(s,u)}\mathcal{T} = 
	\begin{bmatrix}
		\frac{\partial s_1}{\partial s} & \frac{\partial s_1}{\partial u} \\
		\frac{\partial u_1}{\partial s} & \frac{\partial u_1}{\partial u},
	\end{bmatrix}$ with entires given by
	\begin{align}
		\dsds &= \frac{\sum\limits_{i,j=1}^2 \Fij\dgis\dgjs - \sum\limits_{i=1}^2\Fi\ddgis}{\sum\limits_{i,j=1}^2\Fij\dgis\dgjss}, \label{eq: dsds} \\[10pt]
		\dsdu &= \frac{-1}{\sum\limits_{i,j=1}^2\Fij\dgis\dgjss}, \label{eq: dsdu} \\[10pt]
		\duds &= \sum\limits_{i,j}^2\Fij \dgiss \dgjs - \bigg(\sum\limits_{i,j=1}^2\Fij \dgiss \dgjss \nonumber \\[10pt]
		& \qquad + \sum\limits_{i=1}^2 \Fi \ddgiss \bigg) \left( \frac{\sum\limits_{i,j=1}^2 \Fij\dgis\dgjs - \sum\limits_{i=1}^2\Fi\ddgis}{\sum\limits_{i,j=1}^2\Fij\dgis\dgjss} \right), \label{eq: duds} \\[10pt]
		\dudu &= \frac{\sum\limits_{i,j=1}^2\Fij \dgiss \dgjss + \sum\limits_{i=1}^2\Fi \ddgiss}{\sum\limits_{i,j=1}^{2}\Fij \dgis \dgjss}. \label{eq: dudu}
	\end{align}
\end{proposition}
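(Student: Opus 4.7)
The plan is to treat $L(s,s_1) = F(\gamma(s_1)-\gamma(s))$ as the generating function of $\mathcal{T}$, mirroring the setup used for Euclidean billiards around Eq.~\eqref{eq.def.u}. Definition~\ref{def.new.variable} already gives $u = \partial L/\partial s$. If I can also establish the companion identity $u_1 = -\partial L/\partial s_1(s,s_1)$, then $dL = u\,ds - u_1\,ds_1$, and taking the exterior derivative yields $0 = du\wedge ds - du_1\wedge ds_1$, i.e.\ $\mathcal{T}^*(ds\wedge du) = ds\wedge du$, proving the area-preservation part of the proposition.

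The main obstacle is verifying this alternative expression for $u_1$, since $u_1$ is defined via the \emph{next} edge as $u_1 = u(s_1,s_2) = \partial L(s_1,s_2)/\partial s_1$, not via $(s,s_1)$. A direct chain rule on $L(s,s_1)=F(\gamma(s_1)-\gamma(s))$ gives
\[
\frac{\partial L}{\partial s_1}(s,s_1) = \sum_j F_{v_j}(v)\,\gamma'_j(s_1), \qquad v = \gamma(s_1)-\gamma(s).
\]
Since $F$ is $1$-homogeneous, the partials $F_{v_j}$ are $0$-homogeneous, so $F_{v_j}(v) = F_{v_j}(v_0)$ where $v_0 = v/F(v)$ is the outgoing $F$-unit vector at $x_0$; hence $\partial L/\partial s_1 = \mathcal{L}(v_0)(\gamma'(s_1))$. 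Applying the same computation to the next edge from $\gamma(s_1)$ to $\gamma(s_2)$ yields $u_1 = -\mathcal{L}(v_1)(\gamma'(s_1))$, with $v_1$ the outgoing $F$-unit vector at $x_1$. The Finsler reflection law (Proposition~\ref{pro.Finsler.reflection.law}), applied to $w = \gamma'(s_1)\in T_{x_1}\partial Q$, asserts exactly $(\mathcal{L}(v_1)-\mathcal{L}(v_0))(\gamma'(s_1)) = 0$, from which $u_1 = -\partial L/\partial s_1(s,s_1)$ follows and symplecticity is immediate. This is the only step where the billiard reflection law actually enters; the rest is bookkeeping.

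To compute the first row of $D\mathcal{T}$, I would take the total differential of $u = -\sum_j F_{v_j}(v)\gamma'_j(s)$ with respect to the independent variables $(s,s_1)$, using $dv = -\gamma'(s)\,ds + \gamma'(s_1)\,ds_1$ and the product rule. This produces $du = A\,ds + B\,ds_1$ with
\[
A = \sum_{j,k} F_{v_jv_k}(v)\gamma'_j(s)\gamma'_k(s) - \sum_j F_{v_j}(v)\gamma''_j(s), \qquad B = -\sum_{j,k}F_{v_jv_k}(v)\gamma'_j(s)\gamma'_k(s_1).
\]
Solving this linear relation for $ds_1$ gives $\partial s_1/\partial s = -A/B$ and $\partial s_1/\partial u = 1/B$, which are exactly Eqs.~\eqref{eq: dsds} and \eqref{eq: dsdu}.

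For the second row, I would analogously differentiate $u_1 = -\sum_j F_{v_j}(v)\gamma'_j(s_1)$ with respect to $(s,s_1)$, obtaining partials whose structure mirrors $A$ and $B$ but with $\gamma'(s)$ replaced by $\gamma'(s_1)$ in the appropriate slot. To pass from $(s,s_1)$-partials to $(s,u)$-partials I would apply the chain rule identities
\[
\left.\frac{\partial u_1}{\partial s}\right|_u = \left.\frac{\partial u_1}{\partial s}\right|_{s_1} + \frac{\partial u_1}{\partial s_1}\cdot \left.\frac{\partial s_1}{\partial s}\right|_u, \qquad \left.\frac{\partial u_1}{\partial u}\right|_s = \frac{\partial u_1}{\partial s_1}\cdot \left.\frac{\partial s_1}{\partial u}\right|_s,
\]
substituting the expressions from the previous step. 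This yields Eqs.~\eqref{eq: duds} and \eqref{eq: dudu}, completing the proof. The conceptual content is concentrated entirely in the generating-function identity produced by the reflection law; everything afterwards is a two-stage implicit differentiation.
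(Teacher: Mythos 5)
Your proposal is correct and takes essentially the same route as the paper: $L(s,s_1)=F(\gamma(s_1)-\gamma(s))$ as generating function, area preservation from $d(dL)=du\wedge ds-du_1\wedge ds_1=0$, and the two-stage implicit differentiation of $u=-\sum_j F_{v_j}(v)\gamma_j'(s)$ and $u_1=-\sum_j F_{v_j}(v)\gamma_j'(s_1)$, which reproduces \eqref{eq: dsds}--\eqref{eq: dudu} exactly. The single point of divergence is the key identity $u_1=-\frac{\partial L}{\partial s_1}(s,s_1)$: the paper obtains it in one line by differentiating the critical-point condition $\frac{\partial}{\partial s_1}\left[L(s,s_1)+L(s_1,s_2)\right]=0$, whereas you derive it from the conormality statement of Proposition~\ref{pro.Finsler.reflection.law} applied to $w=\gamma'(s_1)\in T_{x_1}\partial Q$ together with the $0$-homogeneity of the $F_{v_j}$ --- two equivalent formulations of the same reflection law, so this is a cosmetic rather than substantive difference.
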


\begin{proof}
We proceed in much the same as in Proposition \ref{pro:DT.euclid}. Let $\gamma(s),\gamma(s_1)$, and $\gamma(s_2)$ be points on the boundary $\pa Q$ such the the segment $\gamma(s_1)\gamma(s_2)$ is the reflected billiard trajectory of the billiard trajectory $\gamma(s)\gamma(s_1)$. 
Recall the distance function  $L(s,s_1) = F(\gamma(s_1)-\gamma(s))$ and the variable $u =u(s,s_1) = \frac{\partial L}{\partial s}(s,s_1)$ as above. Moreover,  $u_1 = u(s_1,s_2)$. 
By the billiard reflection law, $s_1$ is a critical point of the combined distance $L(s,s_1) + L(s_1,s_2)$. Differentiating $s_1$, we get  
\begin{align*}
\frac{\partial}{\partial s_1}\left[ L(s,s_1) + L(s_1,s_2) \right] 
=\frac{\partial L}{\partial s_1}(s,s_1)+  u(s_1,s_2)
=\frac{\partial L}{\partial s_1}(s,s_1)+ u_1= 0.
\end{align*} 
Therefore, 
\begin{align}
u_1 = -\frac{\partial L}{\partial s_1}(s,s_1). \label{eq.verify.u1}
\end{align} 
This establishes the second identity in \eqref{eq.def.u} for Minkowski billiards.
Applying the newly obtained identity \eqref{eq.verify.u1}, we can rewrite the total differential of $L(s, s_1)$ as
\begin{equation}
dL(s,s_1) = \frac{\partial L}{\partial s}(s,s_1) ds + \frac{\partial L}{\partial s_1}(s,s_1) ds_1 = u \, ds - u_1 \, ds_1. \label{eq.dL.Minkowski}
\end{equation}
Taking exterior differential of \eqref{eq.dL.Minkowski} again, we get $ddL = du \wedge ds - du_1 \wedge ds_1 = 0$, thus proving the first part of the proposition. 

Next we will derive the differential of the Minkowski billiard map $\cT: (s,u) \mapsto (s_1, u_1)$. To shorten our expressions, we will use $v = \gamma(s_1)-\gamma(s)$. Then $L(s,s_1) =F(v)$ and 
	\begin{align}
		u &= \frac{\partial L}{\partial s}(s,s_1) = - \sum\limits_{j=1}^2 F_{v_j}(v)\dgis, \\
		u_1 &= -\frac{\partial L}{\partial s_1}(s,s_1) = -\sum\limits_{j=1}^2 F_{v_j}(v)\dgiss,
	\end{align}
where we have applied \eqref{eq.verify.u1} in the second equation.
Taking the differentials of $u$ and $u_1$ yield
		\begin{align}
		du &= \left[ \sum\limits_{j,k =1}^2 F_{v_jv_k}(v)\dgis\dgjs - \sum\limits_{j=1}^2 F_{v_j}(v)\ddgis \right]ds \nonumber \\[10pt]
		&\qquad - \sum\limits_{j,k=1}^2 F_{v_jv_k}(v)\dgis\dgjss \ ds_1  \label{eq: du} \\[10pt]
		du_1 &= \sum\limits_{j=1}^2 F_{v_jv_k}(v) \dgiss \dgjs \ ds - \bigg[ \sum\limits_{j,k=1}^2 F_{v_jv_k}(v) \dgiss \dgjss \nonumber \\[10pt]
		&\qquad + \sum\limits_{j=1}^2 F_{v_j}(v) \ddgiss \bigg]ds_1. \label{eq: du1}
	\end{align}
Solving for $ds_1$ in Eq.~(\ref{eq: du}) gives
		\begin{align}
		ds_1 &= \frac{\sum\limits_{i,j=1}^2 \Fij\dgis\dgjs - \sum\limits_{i=1}^2\Fi\ddgis}{\sum\limits_{i,j=1}^2\Fij\dgis\dgjss} \ ds - \frac{du}{\sum\limits_{i,j=1}^2\Fij\dgis\dgjss} \label{eq: ds1}.
	\end{align}
Substituting Eq.~\eqref{eq: ds1} into Eq.~\eqref{eq: du1} and collecting $ds$ and $du$ terms yield
		\begin{align}
		du_1 &= \left[ \sum\limits_{i,j}^2\Fij \dgiss \dgjs - \bigg(\sum\limits_{i,j=1}^2\Fij \dgiss \dgjss \right. \nonumber \\[10pt]
		& \qquad \left. + \sum\limits_{i=1}^2 \Fi \ddgiss \bigg) \left( \frac{\sum\limits_{i,j=1}^2 \Fij\dgis\dgjs - \sum\limits_{i=1}^2\Fi\ddgis}{\sum\limits_{i,j=1}^2\Fij\dgis\dgjss} \right) \right] \ ds \nonumber \\[10pt]
		& \qquad + \frac{\sum\limits_{i,j=1}^2\Fij \dgiss \dgjss + \sum\limits_{i=1}^2\Fi \ddgiss}{\sum\limits_{i,j=1}^{2}\Fij \dgis \dgjss}du \label{eq: du1 second}.
	\end{align}
Then the entries of the tangent map $D\cT$ follows from Eq.~\eqref{eq: ds1} and Eq.~\eqref{eq: du1 second}.
\end{proof}

\section{Billiards in Mixed Norm Spaces}\label{sec.billiards.symmetry}

In this section we will study the properties of  periodic 2-orbits of Minkowski billiards on the Minkowski plane $(\bR^2,F)$, where the Minkowski norm $F(v) = F_{a,b}(v) = a\lVert v \rVert_2 + b\lVert v \rVert_4$ with $a > 0$, $b \geq 0$, and $a+b=1$. We have showed that $(\bR^2,F)$ is a Minkowski space in Proposition \ref{prop: Mixed norm is Minkowski}, and in fact is a normed space so that the indicatrix $I$ given by the level curve $F(v) = 1$ is symmetric about the origin. The assumption $a+b=1$ not only makes it so that the calculations for the twist coefficient $\tau_1$ are easier to manage, but make it so that the billiard has identical motion to a Euclidean billiard system along the horizontal and vertical axes (see Figure \ref{fig: Mixed norm indicatrix}).

\begin{figure}[htbp]
	\centering
	\subcaptionbox{$a = 0.7$ and $b = 0.3$}{\includegraphics[width=0.32\textwidth]{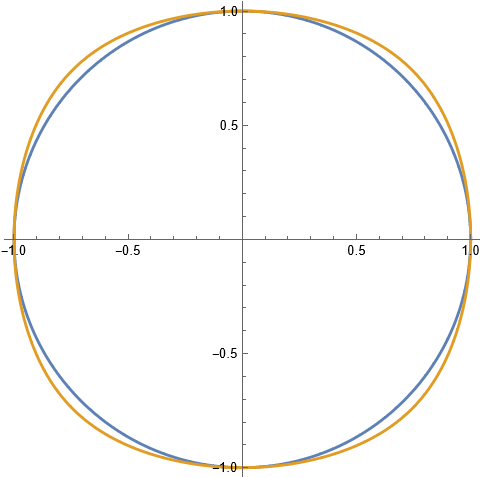}}%
	\hfill 
	\subcaptionbox{$a = 0.4$ and $b = 0.6$}{\includegraphics[width=0.32\textwidth]{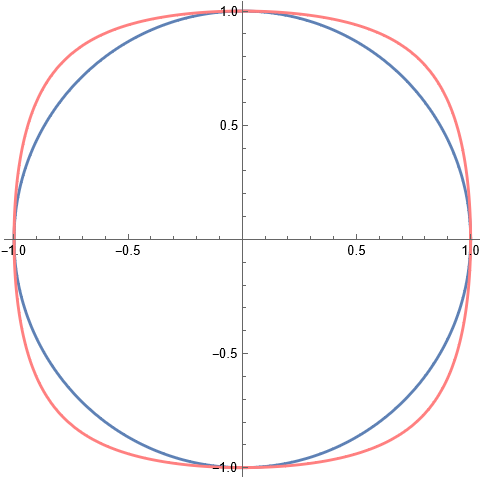}}%
	\hfill 
	\subcaptionbox{$a = 0.1$ and $b = 0.9$}{\includegraphics[width=0.32\textwidth]{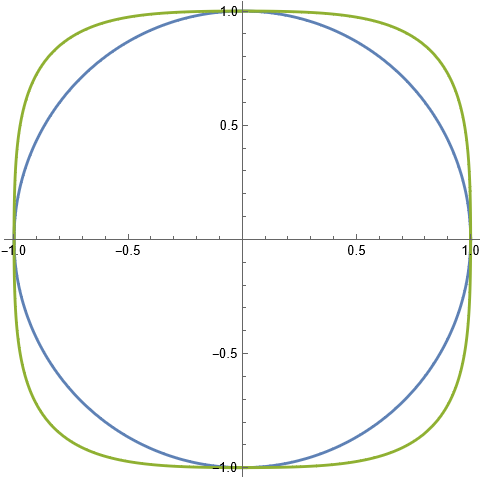}}%
	\caption{Indicatricies of Minkowski spaces with the mixed norm $F_{a,b}$.} \label{fig: Mixed norm indicatrix}
\end{figure}

Let $\alpha(t) = \sum\limits_{n \geq 1} a_{2n}t^{2n}$ and $\beta(t) = \sum\limits_{n\geq1}b_{2n}t^{2n}$, $t \in (-\epsilon, \epsilon)$, be two even functions for some $\epsilon > 0$, and $\gamma_0: t\mapsto (\alpha(t),t)$ and $\gamma_1: t\mapsto (L-\beta(t),-t)$, $t \in (-\epsilon, \epsilon)$, be two curves on $\bR^2$. The class of billiard tables $Q(L,\alpha,\beta)$ we will consider are those whose boundary $\partial Q$ is given by a piece-wise smooth curve $\gamma$ consisting of two horizontal line segments connecting the curves $\gamma_0$ and $\gamma_1$ at their endpoints (see Figure \ref{fig.btable}). Rather than parameterizing $\gamma$ with a global Minkowski arc-length parameter, we will use a local arc-length parameter $s$ on each of the curves $\gamma_j$ such that $F(\gamma_j'(s)) \equiv 1$, $s(\gamma_j(0)) = 0$ for $j = 0,1$. It is clear from the Finsler reflection law that there exists a periodic 2-orbit $\mathcal{O}_2$, bouncing back and forth between $(0,0)$ and $(L,0)$ where $L$ is the Euclidean length between the two reflection points.

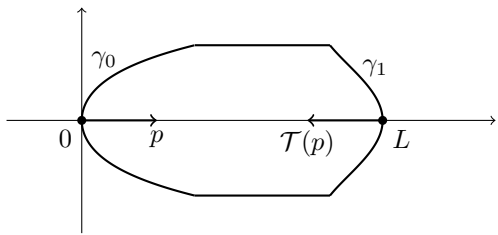
\begin{figure}[htbp]
\begin{tikzpicture}
\draw[->] (-1,0) -- (5.5,0);
\draw[->] (0,-1.5) -- (0,1.5);
\draw [domain=-1:1, samples=100, thick] plot ({4-\x*\x +0.3*\x*\x*\x*\x }, {\x});
\node at (3.9, 0.7) {$\gamma_1$};  
\draw [domain=-1:1, samples=100, thick] plot ({\x*\x +0.5*\x*\x*\x*\x }, {\x});
\node at (0.3,0.8) {$\gamma_0$};
\draw[thick, ->] (0,0) node[below left]{$0$} -- (1,0) node[below]{$p$};
\draw[thick, ->] (4,0) node[below right]{$L$} -- (3,0) node[below]{$\cT(p)$};
\draw[thick] (3.3, 1) -- (1.5, 1) (3.3, -1) -- (1.5, -1);
\fill (0,0) circle[radius=0.06];
\fill (4,0) circle[radius=0.06];
\end{tikzpicture}
\caption{An illustration of the billiard table.}
\label{fig.btable}
\end{figure}

The phase space $\mathcal{M}$ is the set of unit vectors with foot on $\partial Q$ that point inside $Q$. Recall that the new coordinates $(s,u)$ on $\mathcal{M}$ introduced in Section~\ref{sec.su}, where $u = u(s, s_1) = \frac{\partial L}{\partial s}(s,s_1)$, see Definition~\ref{def.new.variable}. Therefore the billiard map $\mathcal{T}: \mathcal{M} \rightarrow \mathcal{M}$ is given by $\mathcal{T}: (s,u) \mapsto (s_1,u_1)$, where $u_1 = u(s_1,s_2)$. For example, the periodic 2-orbit in Figure \ref{fig.btable} is given by $\mathcal{O}_2 = \{p,\mathcal{T}(p)\}$ where $p = (0,0)$ and $\mathcal{T}(p) = (0,0)$ with respect to the corresponding local coordinate systems of the phase space on the two curved parts.

\subsection{Symmetric Minkowski billiards}\label{ss.sym.billiards}

In Section~\ref{sec.BNF} we have stated how Moser's Twist Mapping Theorem (Theorem \ref{thm: Moser twist}) utilizes the twist coefficients $\tau_j$ in the Birkhoff normal form to obtain the nonlinear stability of elliptic periodic billiard orbits. We now construct Birkhoff transformations and give an explicit formula for the first twist coefficient in terms of the geometric properties of the Minkowski billiards. We will then use the first twist coefficient to investigate  the nonlinear stability property of periodic orbits of Minkowski billiards.

We will start with the case $\alpha(t) = \beta(t)$, so that the billiard table $Q(L, \alpha, \alpha)$ is symmetric about the line $x = L/2$. Combining with the symmetry of the Minkowski norm $F_{a,b}$, we get that the radius of curvature functions $R_j(s)$ on the two parts $\gamma_j$ are identical, so we set $R(s) = R_0(s) = R_1(s)$. Moreover, let $\text{Rot}_\pi$ be the rotation of the table $Q(L,\alpha,\alpha)$ about the point $(\frac{L}{2},0)$ by $\pi$. Since $F(v) = F(-v)$ then $\text{Rot}_\pi$ identifies the phase space $\mathcal{M}_1$ of $\gamma_1$ with the phase space $\mathcal{M}_0$ of $\gamma_0$.\footnote{This is false for general Minkowski norms that don't satisfy $F(-v) = F(v)$.}

\subsection{Reducing the two-step map using the symmetry}\label{ss.reducing.one.step}
Pick a  small  neighborhood $U_0 \subset \mathcal{M}_0$ of $P\in \cM_0$.
Then $U_1 = \text{Rot}_\pi(U_0) \subset \mathcal{M}_1$  is a small neighborhood of  $\mathcal{T}(P) \in \cM_1$. Let $\cT_i = \cT|_{U_i}: U_i \to \cM_{1-i}$ be the restriction of the billiard map $\cT$ to $U_i$, $i=0,1$. Observe that $ \text{Rot}_\pi\circ\cT_0 = \cT_1\circ\text{Rot}_\pi$. It follows that, restricting on $U_0$, 
\begin{align}
\cT^2 = \cT_1\circ \cT_0 = \text{Rot}_\pi\circ\cT_0 \circ \text{Rot}_\pi\circ\cT_0
=(\text{Rot}_\pi\circ\cT_0)^2.
\end{align}
If follows that the twist coefficient $\tau_1(\cT^2, P)= 2\tau_1(\text{Rot}_\pi\circ\cT_0, P)$.
Identifying $\cM_1$ with $\cM_0$ using $\text{Rot}_\pi$, we can identify the one-step billiard map $\cT_0$ with the abstract self-map $\text{Rot}_\pi\circ\cT_0: U_0\to \cM_0$. In the following we will abuse our notation and use $\cT$ for the abstract map $\text{Rot}_\pi\circ\cT_0$.
In particular, $\tau_1(\cT^2, P)= 2\tau_1(\cT, P)$.

\subsection{Tangent matrix of the billiard map}\label{ss.tangent.map}
We start with computing the tangent matrix of the billiard map $\mathcal{T}$.  The first and second order partial derivatives of the Minkowski norm $F(v)=a \|v\|_2 + b\|v\|_4$ are given by  

\begin{equation*}
	\begin{aligned}[c]
		F_{v^1} &= \frac{av_1}{\sqrt{v_1^2 + v_2^2}} + \frac{bv_1}{\left( v_1^4 + v_2^4 \right)^{\frac{3}{4}}}\\[10pt]
		F_{v^2} &= \frac{av_2}{\sqrt{v_1^2 + v_2^2}} + \frac{bv_2}{\left( v_1^4 + v_2^4 \right)^{\frac{3}{4}}}
	\end{aligned}
	\qquad\qquad
	\begin{aligned}[c]
		F_{v^1v^1} &= \frac{av_2^2}{\left(v_1^2 + v_2^2 \right)^{\frac{3}{2}}} + \frac{3bv_1^2v_2^4}{\left( v_1^4 + v_2^4 \right)^{\frac{7}{4}}} \\[10pt]
		F_{v^1v^2} &= -\frac{av_1v_2}{\left(v_1^2 + v_2^2\right)^{\frac{3}{2}}} - \frac{3bv_1^3v_2^3}{\left(v_1^4 + v_2^4 \right)} \\[10pt]
		F_{v^2v^2} &= \frac{av_1^2}{\left(v_1^2 + v_2^2 \right)^{\frac{3}{2}}} + \frac{3bv_1^4v_2^2}{\left( v_1^4 + v_2^4 \right)^{\frac{7}{4}}}.
	\end{aligned}
\end{equation*}
Note that the tangent vector at $(0,0)$ and $(L,0)$ are given by $\gamma'_0(0) = \langle 0, -1 \rangle$ and $\gamma'_1(0) = \langle 0, 1 \rangle$ respectively. Recall $\gamma''(s) = \kappa_m(s)n_\gamma(s)$ where $\kappa_m$ and $n_\gamma$ can be written in terms of a parameterization $r(\theta(s))$ of the indicatrix as shown in equations (\ref{eq: km in Euclid}) and (\ref{eq: n in Euclid}). Given the norm $F(v) = a\lVert v \rVert_2 + b\lVert v \rVert_4$ for $a>0$ and $b \geq 0$, the parameterization of $\gamma_0$ is given by
\begin{equation}
	r(\theta(s)) = \frac{1}{a + b \left(\cos^4\theta(s) + \sin^4\theta(s) \right)^{\frac{1}{4}}}.
\end{equation}
Moreover, from equation (\ref{eq: n in Euclid}) we see that $\kappa_m$ may be written in terms of the Euclidean radius of curvature since $\kappa_e(s) = \frac{1}{R(s)}$. Letting $R(0) = R$ and $a+b = 1$, Eq.~\eqref{eq: dsds}--\eqref{eq: dudu} at $P = (0,0)$ simplify to:
\begin{align}
	\dsds(0,0) &= \frac{L}{aR}-1, \label{eq: dsds mixed norm} \\[10pt] 
	\dsdu(0,0) &= \frac{L}{a}, \label{eq: dsdu mixed norm} \\[10pt]
	\duds(0,0) &= \frac{L-2aR}{aR^2}, \label{eq: duds mixed norm} \\[10pt]
	\dudu(0,0) &= \frac{L}{aR}-1. \label{eq: dudu mixed norm}
\end{align}
The tangent matrix of the one-step billiard map $\mathcal{T}$ at $P$ is thus given by
\begin{equation}
	D_p\mathcal{T} = 
	\begin{bmatrix}
		\ta_{10} & \ta_{01} \\
		\tb_{10} & \ta_{01}		
	\end{bmatrix}
	:=
	\begin{bmatrix}
		\frac{L}{aR}-1 & \frac{L}{a} \\[10pt]
		\frac{L-2aR}{aR^2} & \frac{L}{aR}-1
	\end{bmatrix}. \label{eq: DpT mixed norm}
\end{equation}
Note that in the Euclidean case where the indicatrix is a unit circle  ($a = 1$, $b = 0$),  Eq.~\eqref{eq: DpT mixed norm} reduces to the more familiar tangent matrix $D_P\mathcal{T} =
\begin{bmatrix}
	\frac{L}{R} -1 & L \\
	\frac{L}{R^2} - \frac{2}{R} & \frac{L}{R} -1
\end{bmatrix}$, see \cite[Eq.~(1.3)]{JZ22}.
Because $\mathcal{T}$ is symplectic, the eigenvalues $\lambda$ of $D_p\mathcal{T}$ satisfy $\lambda^2 - \Tr(D_P\mathcal{T})\lambda + 1 = 0$, where $\Tr(D_P\mathcal{T}) = \frac{2L}{aR} - 2$. It follows that  $D_P\cT$ is parabolic if $L = 2aR$, hyperbolic if $L > 2aR$, and elliptic if $ 0 < L < 2aR$. Going forward, we will restrict our discussion to the elliptic case. Then the eigenvalues of the tangent mattrix $D_P\cT$ are given by  $\lambda = \ta_{10} - i\sqrt{-\ta_{01}\tb_{10}}$ and $\overline{\lambda} = \frac{1}{\lambda}$, see Eq.~\eqref{eq: DpT mixed norm}. The following non-resonance condition is needed to find the Birkhoff normal form and the first twist coefficient:
\begin{itemize}
	\item [(A)] $\lambda^4 \neq 1 \text{ or equivalently, } L \in \left(0, 2aR \right) \backslash \bigl \{aR \bigl \}$. 
\end{itemize}

\begin{theorem}\label{thm.t1.sym}
Let $0< a \le 1$, $a + b=1$, $(\bR^2, F)$ be the Minkowski plane with the mixed norm $F=F_{a,b}$, $Q(L,\alpha, \beta)$ be the symmetric Minkowski billiard table,  $R_0 = R_1 = R$ be the curvature at $t=0$ (the vertices). Suppose the nonresonance condition (A) is satisfied. Then the first twist coefficient of the one-step billiard map $\mathcal{T}:(s,u) \mapsto (s_1, u_1)$ at the elliptic periodic point $P$ is given by 
	\begin{equation}
		\tau_1(\cT, P) = \frac{a^2 L R R''+(4-3 a) a L+2 (2 a-9) a R+12 R}{8 a^2 R (L-2 a R)} \label{eq: symmetric twist}
	\end{equation}
\end{theorem}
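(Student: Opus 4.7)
The approach is classical: (i) compute the cubic Taylor jet of the reduced one-step billiard map $\cT$ at the elliptic fixed point $P=(0,0)$; (ii) conjugate the linear part into a rigid rotation by a symplectic linear change of coordinates; (iii) read off $\tau_1$ from the standard Birkhoff-normal-form formula, as given e.g.\ by Moeckel \cite{Moe90} (see also Proposition~2.3 of \cite{JZ22}).

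For step (i), I would first Taylor-expand the two boundary curves at their vertices up to fourth order. Since $\alpha=\beta$ is even and the norm $F_{a,b}$ is symmetric under $v_2\mapsto -v_2$, the expansions contain only even powers of the transverse variable, and their second- and fourth-order coefficients are explicit polynomials in $a$, $R=R(0)$, and $R''(0)$; moreover $\gamma_1$ is the $\text{Rot}_\pi$-image of $\gamma_0$. The cubic jet of $\cT$ is then obtained by implicit differentiation, to order three at $(s,s_1)=(0,0)$, of the generating-function relations $u=\partial L/\partial s(s,s_1)$ and $u_1=-\partial L/\partial s_1(s,s_1)$ with $L(s,s_1)=F_{a,b}(\gamma_1(s_1)-\gamma_0(s))$. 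Equivalently, one could differentiate the expressions \eqref{eq: dsds}--\eqref{eq: dudu} of Proposition~\ref{pro.DT.Minkowski} two further times along the fixed point; either way, the final cubic coefficients depend on partial derivatives of $F_{a,b}$ up to order four evaluated at the orbit direction $\pm(1,0)$, together with $R$ and $R''(0)$.

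For steps (ii) and (iii), write the eigenvalues of the linear part \eqref{eq: DpT mixed norm} as $\lambda=e^{i\theta}$ and $\bar\lambda$; under (A) both $\lambda^2\ne 1$ and $\lambda^4\ne 1$, so there exists a real symplectic matrix $M$ with $M^{-1}D_P\cT M$ equal to the rotation by $\theta$. Introducing the complex coordinate $z=x+iy$, the map takes the form $\cT(z,\bar z)=\lambda z+\sum_{j+k=2}A_{jk}z^{j}\bar z^{k}+\sum_{j+k=3}A_{jk}z^{j}\bar z^{k}+O(|z|^{4})$, and Moeckel's formula expresses $\tau_1$ as $\Im(\bar\lambda A_{21})$ corrected by explicit rational combinations of $|A_{20}|^2$, $|A_{11}|^2$, $|A_{02}|^2$ whose denominators $\bar\lambda^{2}-1$, $\bar\lambda-1$, $\lambda^{2}-\bar\lambda$ are all nonzero thanks to (A).

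The main obstacle will be the sheer volume of symbolic algebra in the cubic jet: the fourth-order partials of $F_{a,b}$ at $\pm(1,0)$ are substantial rational expressions in $a$ and $b$. Two structural simplifications will greatly reduce the burden: the evennesses $F_{a,b}(v_1,-v_2)=F_{a,b}(v_1,v_2)$ and $\alpha(-t)=\alpha(t)$ kill every term of odd parity in the transversal variable, and the $\text{Rot}_\pi$-symmetry forces further cancellations among the $A_{jk}$. After these simplifications only $a$, $L$, $R$ and $R''(0)$ survive as inputs at cubic order, matching the form of \eqref{eq: symmetric twist}. The final collapse of the remaining expression to the compact rational function on the right-hand side of \eqref{eq: symmetric twist} is delicate and is most safely verified with the aid of a computer algebra system.
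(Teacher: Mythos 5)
Your proposal follows essentially the same route as the paper: third-order Taylor jet of the reduced one-step map $\cT=\mathrm{Rot}_\pi\circ\cT_0$ via the generating-function relations $u=\partial L/\partial s$, $u_1=-\partial L/\partial s_1$, linear symplectic conjugation of $D_P\cT$ to a rotation, complexification, and extraction of $\tau_1$ from the coefficient of $z^2\bar z$; the only difference is that you invoke Moeckel's ready-made normal-form formula where the paper carries out the third (resonant-term--killing) and fourth (generating-function, symplectic) transformations by hand, which is an inessential variation.

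One slip deserves correction, though it is self-healing in this setting. You claim the denominators of the quadratic-correction terms in Moeckel's formula, in particular $\lambda^2-\bar\lambda$, are nonzero ``thanks to (A)''; this is false, since (A) only asserts $\lambda^4\neq 1$ and does not exclude $\lambda^3=1$ (i.e.\ $L=aR/2$, which lies in the elliptic range). The reason no order-three resonance condition is needed is the one you half-state elsewhere: because $\alpha$ is even, $R'(0)=0$, and the paper shows all second-order partials $\frac{\partial^{j+k}s_1}{\partial s^j\partial u^k}(0,0)=\frac{\partial^{j+k}u_1}{\partial s^j\partial u^k}(0,0)=0$ for $j+k=2$, so the quadratic coefficients $A_{20},A_{11},A_{02}$ vanish identically and the correction terms drop out regardless of whether $\lambda^3=1$; then $\tau_1=\Im(\bar\lambda A_{21})$ exactly, matching the paper's relation $c_{21}=i\tau_1$ forced by the unit Jacobian. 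With that justification substituted for the erroneous appeal to (A), your argument is sound and, modulo the symbolic algebra you rightly delegate to a computer algebra system, reproduces \eqref{eq: symmetric twist}.
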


\begin{remark}
As done in \cite{JZ22}, the functions $\alpha(t)$ and $\beta(t)$ are assumed to be even polynomials in order to simplify the computation of $\tau_1$. Without these assumptions, the expression for $\tau_1$ would be significantly more complex as it would also depend on $R'$. The proof of Theorem \ref{thm.t1.sym} is given over the subsequent sections and follows closely the approaches given in \cite{JZ22, Moe90}.
Plugging in $a=1$ and $b=0$, we recover the first twist coefficient for the periodic orbit of period $2$ for Euclidean billiards:
$\ds \tau_1 = \frac{1}{8}\left(\frac{1}{R}-\frac{LR''}{2R-L}\right)$, see also \cite{KP05, JZ22}.
\end{remark}

\subsection{Taylor polynomials of the billiard map}\label{ss.Taylor.coefficients}

To begin, we first compute the Taylor expansion of the billiard map $\mathcal{T}(s,u) = (s_1,u_1)$ at $P = (0,0)$. We have computed the first order partial derivatives of $s_1$ and $u_1$ at $P$  which are given in equations \eqref{eq: dsds mixed norm}--\eqref{eq: dudu mixed norm}. To compute the higher order derivatives, we can apply the chain rule to equations \eqref{eq: dsds}--\eqref{eq: dudu}. More details for this calculation can be found in \cite{Vil}. Note that $R(s)$ has critical points at $\gamma_0(0)$ and $\gamma_1(0)$. Thus $R'(0) = 0$, and because of this, further calculations show that $\frac{\partial^{j+k} s_1}{\partial s^j \partial u^k}(0,0) = \frac{\partial^{j+k} u_1}{\partial s^j \partial u^k}(0,0) = 0$ for $j+k = 2$. Denote $R^{(j)}(0) = R^{(j)}$, we list the third order partial derivatives of $s_1$ and $u_1$ needed for the calculation of $\tau_1$:

\begin{flalign}
	\frac{\partial^3 s_1}{\partial s^3}(0,0) &= \frac{1}{a^4R^5} \bigg[2a^4LR^2-3a^3R \left(L^2-LR+2R^2\right)-a^3LR^3R'' \nonumber \\
	&\qquad + a^2 \left(L^3+3 L R^2\right)-3 a L R (L-3 R)-6 L R^2 \bigg], 
\end{flalign}
\begin{flalign}	
	\frac{\partial^3 s_1}{\partial s^2 \partial u} (0,0) &= \frac{1}{a^4R^4} \bigg[ 2a^4LR^2-2a^3R \left(L^2+R^2\right)+a^2 \left(L^3+2LR^2\right) \nonumber \\ 
	&\qquad -3aLR(L-3R)-6LR^2 \bigg],
\end{flalign}
\begin{flalign}	
	\frac{\partial^3 s_1}{\partial s \partial u^2}(0,0) &= \frac{L \left(-a^3 L R+a^2 \left(L^2+R^2\right)-3 a R (L-3 R)-6 R^2\right)}{a^4 R^3}, 
\end{flalign}
\begin{flalign}	
	\frac{\partial^3 s_1}{\partial u^3}(0,0) &= \frac{L \left(a^2 L^2-3 a R (L-3 R)-6 R^2\right)}{a^4 R^2}, 
\end{flalign}
\begin{flalign}	
	\frac{\partial^3 u_1}{\partial s^3}(0,0) &= \frac{1}{a^4R^6} \bigg[ 6LR^2-8a^5R^3-12a^3LR(L+R)+3a^2L^2(L + 3R)  + 2a^4R^2(8L + 3R)\nonumber \\
	&\qquad - 3aL(L^2 - LR + 3R^2)+ aR(L^3-3aL^2R+4a^2LR^2-2a^3R^3) R^{''} \bigg], 
\end{flalign}
\begin{flalign}	
	\frac{\partial^3 u_1}{\partial s^2 \partial u}(0,0) &= \frac{-1}{a^4R^5} \bigg[ 6a^4 LR^2-a^3R \left(9L^2+3 LR + 2R^2\right)  +a^2 L \left(3 L^2+6 L R+R^2\right) \nonumber \\
	&\qquad- 3aL\left(L^2- LR + 3R^2\right) + aLRR''(L-a R)^2 + 6LR^2 \bigg],
\end{flalign}
\begin{flalign}	
	\frac{\partial^3 u_1}{\partial s \partial u^2}(0,0) &= \frac{-1}{a^4R^4} \bigg[ 6LR^2 + 2a^4 LR^2 + a^2L(3L^2 + 3LR + 2R^2) \nonumber \\
	&\qquad - 3aL(L^2 - LR + 3R^2) - 2a^3(3L^2R + R^3) + aL^2R(L - aR)R^{''}  \bigg], 
\end{flalign}
\begin{flalign}	
	\frac{\partial^3 u_1}{\partial u^3}(0,0) &= \frac{3 (a-1) L \left(a^2 L R-a \left(L^2-L R+R^2\right)+2 R^2\right)-a L^3 R R''}{a^4 R^3}.
\end{flalign}

The Taylor polynomial of $\mathcal{T}(s,u) = (s_1, u_1)$ at $P = (0,0)$ is thus given by
\begin{align}
	s_1(s,u) &= \ta_{10}s + \ta_{01}u + \sum_{j+k = 3} \ta_{jk}s^ku^k + \text{h.o.t} \label{eq: s1 mixed norm}\\
	u_1(s,u) &= \tb_{10}s + \tb_{01}u + \sum_{j+k = 3} \tb_{jk}s^ku^k + \text{h.o.t}, \label{eq: u1 mixed norm}
\end{align}
where $\ta_{jk} = \frac{1}{j!k!} \frac{\partial^{j+k} s_1}{\partial s^j \partial u^k}(0,0)$, $\tb_{jk} = \frac{1}{j!k!} \frac{\partial^{j+k} u_1}{\partial s^j \partial u^k}(0,0)$, and h.o.t stands for higher order terms.

\subsection{The first transformation: a rigid rotation}\label{ss.rigid}
Having computed the third order Taylor polynomial of the billiard map $\mathcal{T}$, we may now initiate the construction of a symplectic coordinate transformation for $\mathcal{T}$ to be in Birkhoff normal form. The first step consists of finding a coordinate transformation for which $D_p\mathcal{T}$ acts as a rigid rotation.

Given $D_P\mathcal{T}$ as in equation (\ref{eq: DpT mixed norm}), $\lambda = \ta_{10} - i\sqrt{-\ta_{01}\tb_{10}}$ is an eigenvalue of $D_P\mathcal{T}$ with corresponding eigenvector $v_\lambda = \left[ \begin{smallmatrix} i\eta \\[7pt] \eta^{-1} \end{smallmatrix} \right]$, where $\eta = \left(-\ta_{01}\tb_{10}\right)^{1/4} > 0$. It follows that $D_P\mathcal{T}$ is conjugate to a rotation matrix $R_\theta$ where $\theta \in (0, 2\pi)$ is the argument of $\lambda$. That is, we can write $D_PT = BR_\theta B^{-1}$ where

\begin{equation}
	R_\theta = 
	\begin{bmatrix}
		\cos \theta & -\sin \theta \\
		\sin \theta & \cos \theta 
	\end{bmatrix},
	\qquad
	B = 
	\begin{bmatrix}
		\eta & \ 0 \\
		0 & \ \eta^{-1}
	\end{bmatrix}.
\end{equation}

Define the coordinate transformation $(s,u) \mapsto (x,y)$ given by 

\begin{equation}
	\begin{bmatrix}
		x \\ y
	\end{bmatrix}
	= B^{-1}\begin{bmatrix}
		s \\ u
	\end{bmatrix}
	= \begin{bmatrix}
		s/\eta \\ \eta u
	\end{bmatrix}.
\end{equation}

\noindent We abuse notation by adopting the convention that $\mathcal{T}$ will always denote the billiard map regardless of the coordinate system. This will hold true for later transformations as well. Therefore we denote $\mathcal{T}(x,y) = (x_1, y_1) = (s_1/\eta, \eta u_1)$, and it follows form equations (\ref{eq: s1 mixed norm}) and (\ref{eq: u1 mixed norm}) that

\begin{align}
	\begin{bmatrix}
		x_1 \\ y_1
	\end{bmatrix}
	&= B^{-1}
	\begin{bmatrix}
		\ta_{10}s + \ta_{01}u + \sum\limits_{j+k =3} \ta_{jk}s^ju^k \\
		\tb_{10}s + \tb_{01}u + \sum\limits_{j+k =3} \tb_{jk}s^ju^k
	\end{bmatrix} 
	+ \ \text{h.o.t} \label{eq: x1y1 B-1 mixed norm} \\[5pt]
	&= R_\theta
	\begin{bmatrix}
		x \\ y
	\end{bmatrix}
	+
	\begin{bmatrix}
		\sum\limits_{j+k = 3} a_{jk}x^jy^k \\ 
		\sum\limits_{j+k = 3} b_{jk}x^jy^k
	\end{bmatrix}
	+ \ \text{h.o.t} \label{eq: x1y1 R mixed norm}
\end{align}

\noindent where we obtained (\ref{eq: x1y1 B-1 mixed norm}) from (\ref{eq: x1y1 R mixed norm}) by using the relation 
$\begin{bmatrix}
	s \\ u
\end{bmatrix}
=
B
\begin{bmatrix}
	x \\ y
\end{bmatrix}
= 
\begin{bmatrix}
	\eta x \\ y/\eta
\end{bmatrix}$.

\noindent The coefficients $a_{jk}$ and $b_{jk}$ of the billiard map $\mathcal{T}$ in the new coordinates $(x,y)$ are given by

\begin{equation}
	a_{jk} = \eta^{j-k-1}\ta_{jk} \qquad \text{and} \qquad b_{jk} = \eta^{j-k+1}\tb_{jk}
\end{equation}

\noindent for $j+k = 1,3$. Finally, note that $\mathcal{T}:(x,y) \mapsto (x_1,y_1)$ is symplectic since both $B$ and $T:(s,u) \mapsto (s_1,u_1)$ are symplectic. 

\begin{figure}[htbp]
	\begin{align*}
		\xymatrixcolsep{3pc}
		\xymatrixrowsep{3pc}
		\xymatrix{(s,u) \ar[d]_{\mathcal{T}} \ar[r]^{B^{-1}} & (x,y) \ar[d]_{\mathcal{T}}\\
			(s_1,u_1) \ar[r]_{B^{-1}} & (x_1,y_1)} 
	\end{align*}
	\caption{The commutative diagram of the first coordinate transformation}
	\label{fig: first tower of transformations}
\end{figure}

\subsection{The second transformation: diagonalizing the linear terms}\label{ss.diagonal}

Next, we now need a second coordinate transformation in which $R_\theta$ becomes the diagonal matrix
$D = 
\begin{bmatrix}
	\lambda & \hspace{10pt} 0 \\
	0 & \hspace{10pt} \overline{\lambda}
\end{bmatrix}$.
To do so, we embed $\bR^2 \subset \bC^2$ and consider $\mathcal{T}: (x,y) \rightarrow (x_1, y_1)$ to be a complex symplectic function on a small neighborhood $(0,0)\in \bC^2$. Define new complex coordinates $z$ and $w$ as 

\begin{equation}
	z = x + iy \qquad w = x - iy \label{z,w def}
\end{equation}

\noindent and let  $\mathcal{T}(z,w) = (z_1, w_1) = (x_1+iy_1, x_1-iy_1)$. Note that under this coordinate transformation, the subspace $\bR^2$ of $\bC^2$ is mapped to $\{(z,w) \in \bC^2 : w = \overline{z} \}$ which may also be regarded as a real subspace. Observe further that $\det D_P\mathcal{T} \neq 0$. Thus by Inverse Function Theorem, there exists a neighborhood about $P$ on which $\mathcal{T}:(z,w) \mapsto (z_1,w_1)$ is a diffeomorphism so that $w_1 = \overline{z}_1$ if and only if $w = \overline{z}$.

To compute the coefficients of the billiard map $\mathcal{T}:(z,w) \mapsto (z_1,w_1)$, we plug the Taylor expansions of $x_1$ and $y_1$ given in equation \eqref{eq: x1y1 R mixed norm} into the definitions for $z_1$ and $w_1$ to get

\begin{align}
	z_1 &= x_1 + iy_1 \nonumber \\
	&= x\cos\theta - y\sin\theta + \sum_{j+k =3}a_{jk}x^jy^k + i\left( x\sin\theta + y\cos\theta + \sum_{j+k = 3} b_{jk}x^jy^k \right) + \text{h.o.t.} \nonumber \\[10pt]
	&= \lambda \left( z + \sum_{j+k=3} c_{jk}z^jw^k \right) + \text{h.o.t.}, \label{eq: z1} \\
	\intertext{and similarly,}
	w_1 &= \overline{\lambda}\left( w + \sum_{j+k = 3} \overline{c}_{jk}w^jz^k \right) + \text{h.o.t.}. \label{eq: w1}
\end{align}

\noindent The $c_{jk}$ in equations (\ref{eq: z1}) and (\ref{eq: w1}) are given by

\begin{align}
	c_{30} &= 2^{-3}\overline{\lambda}\left( a_{30} + ib_{30} - ia_{21} + b_{21} - a_{12} - ib_{12} + ia_{03} - b_{03} \right), \\
	c_{21} &= 2^{-3}\overline{\lambda}\left( 3a_{30} + 3ib_{30} - ia_{21} + b_{21} + a_{12} + ib_{12} - 3ia_{03} + 3b_{03}  \right), \\
	c_{12} &= 2^{-3}\overline{\lambda}\left( 3a_{30} + 3ib_{30} + ia_{21} - b_{21} + a_{12} + ib_{12} + 3ia_{03} - 3b_{03} \right), \\
	c_{03} &= 2^{-3}\overline{\lambda}\left( a_{30} + ib_{30} + ia_{21} - b_{21} - a_{12} - ib_{12} -ia_{03} + b_{03}\right).
\end{align}

\begin{figure}[htbp]
	\begin{align*}
		\xymatrixcolsep{3pc}
		\xymatrixrowsep{3pc}
		\xymatrix{(s,u) \ar[d]_{\mathcal{T}} \ar[r]^{B^{-1}} & (x,y) \ar[d]_{\mathcal{T}} \ar[r] & (z,w) \ar[d]_{\mathcal{T}}  \\
			(s_1,u_1) \ar[r]_{B^{-1}} & (x_1,y_1) \ar[r] & (z_1, w_1) } 
	\end{align*}
	\caption{The diagram after the second coordinate transformation}
	\label{fig: second tower of transformations}
\end{figure}
 
Since $\mathcal{T}:(x,y) \mapsto (x_1,y_1)$ is symplectic, then $D_P\mathcal{T} = 1$ in the $(x,y)$ coordinates. Further, the transformation $(x,y) \mapsto (z,w)$ has constant Jacobian. It follows that $	\frac{\partial(z_1, w_1)}{(z,w)} = 1 + (3c_{30} + \overline{c}_{12})z^2 + (2c_{21}+2\overline{c}_{21})zw + (c_{12}+3\overline{c}_{30})w^2 + \text{h.o.t.}$ is identically equal to one so that all the non-constant terms vanish. We thus get the following relations:

\begin{equation}
	c_{12} = -3\overline{c}_{30} \qquad c_{21} = \overline{c}_{21}
\end{equation}

\noindent In particular, $c_{21}$ is purely imaginary, and we can thus write $c_{21} = i \tau_1$ where $\tau_1 \in \bR$ is given by: 

\begin{equation}
	\tau_1 = \frac{a\left(a + 4b\right)L-2\left(a^2 -3ab-6b^2\right)R + a^2\left(a+b\right)^2LRR''}{8a^2R\left[\left(a+b\right)L-2aR\right]} \label{eq: twist mixed norm symmetric}.
\end{equation}
Shortly we will show that $\tau_1$ is in fact, the first twist coefficient of $\mathcal{T}$ at $P$.

\subsection{The third transformation: killing most 3rd-order terms}\label{ss.killing}

Now we need a transformation $(z,w) \mapsto (z',w')$ such that most third order terms cancel. Let this transformation be given by
\begin{align}
	z' &:= z + p_3(z,w) = z + \sum_{j+k = 3}d_{jk}z^jw'^k, \label{eq: z' mixed norm}\\
	w' &:= w + \overline{p}_3(w,z) = w + \sum_{j+k = 3}\overline{d}_{jk}w^jz^k.
\end{align}
Then $z'_1$ satisfies
\begin{align}
	z'_1 &= z_1 + \sum_{j+k = 3}d_{jk}z^j_1w^k_1= \lambda \left(z + \sum_{j+k = 3}c_{jk}z^jw^k \right) + \sum_{j+k = 3}d_{jk}\lambda^{j-k}z^jw^k + \ \text{h.o.t} \\[5pt]
	&= \lambda \left[ z' + \sum_{j+k = 3} \left( -d_{jk} + c_{jk} + d_{jk}\lambda^{j-k-1} \right)(z')^j(w')^k \right] + \ \text{h.o.t.} \label{eq: z1' mixed norm}.
\end{align}

\noindent Observe how in equation (\ref{eq: z1' mixed norm}) the $d_{21}$ terms always cancel. Therefore without loss of generality, we can set $d_{21} = 0$. By setting $-d_{jk} + c_{jk} + d_{jk}\lambda^{j-k-1} = 0$ for $(j,k) \in \{ (3,0), (1,2), (2,1) \}$ we obtain

\begin{equation}
	d_{30} = \frac{c_{30}}{1-\lambda^2} , \qquad
	d_{12} = \frac{c_{12}}{1-\overline{\lambda}^2} = -3\overline{d}_{30}, \qquad 
	d_{03} = \frac{c_{03}}{1-\overline{\lambda}^4},
\end{equation}

\noindent so that equation (\ref{eq: z1' mixed norm}) can be rewritten as $z'_1 = \lambda\big(z' + c_{21}(z')^2(w')\big) + \text{h.o.t.}$. The ellipticity of $D_P\mathcal{T}$ implies $\lambda \neq \pm 1$, ensuring that the expressions for $d_{30}$ and $d_{12}$ do not involve division by zero.  Similarly, assumption (A1) does the same for the expression of $d_{03}$.

Having $c_{21} = i\tau$ and $\lambda = e^{i\theta}$, we can further rewrite $z'_1$ if we restrict to the subspace  $\{w = \overline{z}\}$:
\begin{equation}
	z'_1 = e^{i\theta}\left(z' + i\tau |z'|^2z'\right) + \text{h.o.t.} = e^{i\left(\theta + \tau_1 |z'|^2 \right)}z' + h.o.t..
\end{equation}
This shows how the linear portion of billiard map $\mathcal{T}:(z',w') \mapsto (z'_1,w'_1)$ acts as a rotation about the elliptic fixed point. However, in the $(z',w')$ coordinates, $\mathcal{T}$ is not necessarily symplectic.

\begin{figure}[htbp]
	\begin{align*}
		\xymatrixcolsep{3pc}
		\xymatrixrowsep{3pc}
		\xymatrix{(s,u) \ar[d]_{\mathcal{T}} \ar[r]^{B^{-1}} & (x,y) \ar[d]_{\mathcal{T}} \ar[r] & (z,w) \ar[d]_{\mathcal{T}} \ar[r]^{} & (z',w') \ar[d]_{\mathcal{T}} \ar[r] & \cdots \\
			(s_1,u_1) \ar[r]_{B^{-1}} & (x_1,y_1) \ar[r] & (z_1, w_1) \ar[r] & (z'_1,w'_1)  \ar[r] & \cdots} 
	\end{align*}
	\caption{The diagram after the third coordinate transformation}
	\label{fig: tower of transformations}
\end{figure}

\subsection{The fourth transformation: a symplectic transformation}\label{ss.symplectic}

As already mentioned, the billiard map $\mathcal{T}:(z',w') \mapsto (z'_1,w'_1)$ given in the previous section is not necessarily symplectic. This is an obstacle to using Moser's twist mapping theorem to prove the nonlinear stability of periodic billiard orbits. In this section, we will use the previous transformation $(z,w) \mapsto (z',w')$ to compute a real symplectic transformation $h_N:(x,y) \mapsto (X,Y)$ such that 
\begin{equation}
	h_N^{-1} \circ T \circ h_N \left(
	\begin{bmatrix}
		x \\ y
	\end{bmatrix}
	\right) = 
	\begin{bmatrix}
		\cos\Theta(r) & -\sin\Theta(r) \\
		\sin\Theta(r) & \cos\Theta(r)
	\end{bmatrix}
	\begin{bmatrix}
		x \\ y
	\end{bmatrix} + \text{h.o.t.},
\end{equation}
where $r^2 = x^2 + y^2$ and $\Theta(r) = \theta + \tau_1r^2 + \tau_2r^4 + \cdots + \tau_{N-1}r^{2(N-1)}$. 

First, notice that $p_3(z,w)$ satisfies $\frac{\partial p_3}{\partial z} = - \frac{\partial \overline{p}_3}{\partial w}$. Hence there exists a polynomial $s_4(z,w)$ satisfying $\frac{\partial s_4}{\partial w}(z,w) = p_3(z,w)$ and $\frac{\partial s_4}{\partial z}(z,w) = -\overline{p}_3(w,z)$, given by 
\begin{equation}
	s_4(s,w) = \frac{1}{4}\frac{\overline{c}_{03}}{\lambda^4-1}z^4 - \frac{1}{4}\frac{c_{03}}{\overline{\lambda}^4-1}w^4 + \frac{\overline{c}_{30}}{\overline{\lambda}^2-1}zw^3 - \frac{c_{30}}{\lambda^2-1}z^3w,
\end{equation}
that is purely imaginary whenever $w = \overline{z}$. Hence we can thus define a real valued function $g_4(x+iy,x-iy) = \frac{i}{2}s_4(x+iy,x-iy)$. Next, consider the generating function $G(x,Y) := xY + g_4(x+iY, x-iY)$ of the symplectic transformation $(x,y) \mapsto (X,Y)$ where $X = X(x,y)$ and $Y = Y(x,y)$. Then 
\begin{alignat}{3}
	X &:= \frac{\partial G}{\partial Y} & &= x + \frac{i}{2} \big[ -\overline{p}_3(x-iY,x+iY)(i) + p_3(x+iY,x-iY)(-i) \big] \nonumber \\
	& & &= x + \Re\left(p_3(x+iY, x-iY)\right) \nonumber \\
	& & &= x + \Re\left( \sum\limits_{j+k=3}d_{jk}(x+iY)^j(x-iY)^k \right) \nonumber \\
	& & &=: x + \sum_{j+k =3}p_{jk}x^jY^k ,\label{eq: X}\\
	y &:= \frac{\partial G}{\partial x} & &= Y + \frac{i}{2} \bigg[ -\overline{p}_3(x-iY,x+iY) + p_3(x+iY,x-iY) \bigg] \nonumber \\
	& & &= Y - \Im\left(p_3(x+iY,x-iY)\right) \nonumber \\
	& & &= Y - \Im\left( \sum\limits_{j+k=3}d_{jk}(x+iY)^j(x-iY)^k \right) \nonumber\\
	& & &=: Y - \sum_{j+k = 3}q_{jk}x^jY^k. \label{eq: y}
\end{alignat}
It follows from equations (\ref{eq: X}) and (\ref{eq: y}), that on a small neighborhood about $(x,y) = (0,0)$, we have 
\begin{align}
	X &= x + \sum_{j+k = 3}p_{jk}x^jy^k + \text{h.o.t.},\\
	Y &= y + \sum_{j+k = 3}q_{jk}x^jy^k + \text{h.o.t.}.
\end{align}
Letting, $Z = X + iY$ and $W = X - iY$ gives
\begin{align}
	Z &= x + iy + \Re\left(p_3(x+iy, x-iy)\right) + i\Im\left(p_3(x+iy, x-iy)\right) + \text{h.o.t.} \nonumber \\
	&= z + p_3(z,w) + \text{h.o.t.}
	\intertext{Similarly,}
	W &= w + \overline{p}_3(w,z) + \text{h.o.t.},
\end{align}
so that the billiard map $\mathcal{T}: (Z,W) \mapsto (Z_1,W_1)$ is given by 
\begin{align}
	Z_1 &= e^{i\left(\theta + \tau_1|Z| \right)}Z + \text{h.o.t.}, \label{eq: Z1}\\
	W_1 &= e^{-i\left(\theta + \tau_1|Z|\right)}W + \text{h.o.t.}. \label{eq: W1}
\end{align}

Letting $\Theta = \theta + \tau_1|Z|^2$ and converting back to the variables $X$ and $Y$ yields the desired Birkhoff normal form, showing $\tau_1$ is the first twist coefficient of $\mathcal{T}$:
\begin{equation}
	\begin{bmatrix}
		X_1 \\ Y_1
	\end{bmatrix} =
	\begin{bmatrix}
		\cos\Theta & -\sin\Theta \\
		\sin\Theta & \cos\Theta 
	\end{bmatrix}
	\begin{bmatrix}
		X \\ Y
	\end{bmatrix} + \text{h.o.t.}
\end{equation}

\begin{figure}[htbp]
	\begin{align*}
		\xymatrixcolsep{3pc}
		\xymatrixrowsep{3pc}
		\xymatrix{(s,u) \ar[d]_{\mathcal{T}} \ar[r]^{B^{-1}} & (x,y) \ar[d]_{\mathcal{T}} \ar[r] & (X,Y) \ar[d]_{\mathcal{T}} \ar[r]^{} & (Z,W) \ar[d]_{\mathcal{T}}\\
			(s_1,u_1) \ar[r]_{B^{-1}} & (x_1,y_1) \ar[r] & (X_1, Y_1) \ar[r] & (Z_1,W_1)} 
	\end{align*}
	\caption{The diagram after the fourth coordinate transformation.}
	\label{fig: fourth transformation}
\end{figure}

\subsection{The first twist coefficient for asymmetric Minkowski billiards}\label{ss.asymmetry}

In this subsection we study the general case where the (Euclidean) radius of curvature functions $R_j(s)$, have different values around $s=0$. In particular, $R_0(0) = R_0$ and $R_1(0) = R_1$ may be different. For the class of billiard tables $Q(L,\alpha, \beta)$, this will occur whenever the even polynomials $\alpha(t) = \sum_{n\geq1}\alpha_{2n}t^{2n}$ and $\beta(t) = \sum_{n\geq1}\beta_{2n}t^{2n}$ have different values for $\alpha_2$ and $\beta_2$. Going forward we will assume that $\alpha_2 > \beta_2$ so that $R_0 \leq R_1$. As before, we will consider the periodic 2-orbit $\mathcal{O}_2 = \{P, T(P)\}$.

The billiard map ${\mathcal{T}}^2$ is given by the composition $(s,u) \xrightarrow{{\mathcal{T}}} (s_1,u_1) \xrightarrow{{\mathcal{T}}} (s_2, u_2)$ where as before $s_1 = s_1(s,u)$, $u_1 = u_1(s,u)$, $ s_2=s_2(s_1,u_1)$ and  $u_2 =u_2(s_1,u_1)$. The partial derivatives $\frac{\partial^{j+k}s_2}{\partial s^j \partial u^k}$ and $\frac{\partial^{j+k}u_2}{\partial s^j \partial u^k}$ are straightforward albeit tedious applications of the chain rule. To illustrate it, we give expressions for $\frac{\partial s_2}{\partial s}$ and $\frac{\partial^2 s_2}{\partial s^2}$: 
\begin{align}
	\frac{\partial s_2}{\partial s} &= \frac{\partial s_2}{\partial s_1}\dsds + \frac{\partial s_2}{\partial u_1}\duds, \\[10pt]
	\frac{\partial^2 s_2}{\partial s^2} &= \frac{\partial^2 s_2}{\partial s_1^2} \left(\dsds \right)^2 + 2 \frac{\partial^2 s_2}{\partial s_1 \partial u_1}\duds\dsds + \frac{\partial s_2}{\partial s_1}\frac{\partial^2 s_1}{\partial s^2} + \frac{\partial^2 s_2}{\partial u_1^2} \left(\duds \right)^2 + \frac{\partial s_2}{\partial u_1}\frac{\partial^2 u_1}{\partial s^2}.
\end{align}

Denote $\tilde{a}_{jk} =\frac{1}{j!k!}\frac{\partial^{j+k}s_2}{\partial s^j \pa u^k}(0,0)$ and $\tilde{b}_{jk} =\frac{1}{j!k!}\frac{\partial ^{j+k}u_2}{\partial s^j \pa u^k}(0,0)$. Calculating the remaining first order partial derivatives and evaluating at $\mathcal{O}_2$, show that $D_P{\mathcal{T}}^2$ is given by 

\begin{equation}
	D_P{\mathcal{T}}^2 =
	\begin{bmatrix}
		\ta_{10} & \ta_{01} \\
		\tb_{10} & \tb_{01}
	\end{bmatrix}
	=
	\begin{bmatrix}
		\frac{L}{aR_1}-1 & \frac{L}{a} \\
		\frac{L-a(R_0 + R_1)}{aR_0R_1} & \frac{L}{aR_0}-1
	\end{bmatrix}
	\begin{bmatrix}
		\frac{L}{aR_0}-1 & \frac{L}{a} \\
		\frac{L-a(R_0+R_1)}{aR_0R_1} & \frac{L}{aR_1}-1
	\end{bmatrix},
\end{equation}
where 
\begin{align}
	\ta_{10} &= \tb_{01} = \frac{a R_0 \left(a R_1-2 L\right)+2 L \left(L-a R_1\right)}{a^2 R_0 R_1}, \\[10pt]
	\ta_{01} &= \frac{2L}{a}\left( \frac{L}{aR_1}-1 \right), \\[10pt]
	\tb_{10 }&=\frac{2 \left(L-a R_0\right) \left(L-a \left(R_0+R_1\right)\right)}{a^2 R_0^2 R_1}.
\end{align}
Then the orbit $\mathcal{O}_2$ is parabolic for $L \in \left\{ aR_0, aR_1, a(R_0 + R_1) \right\}$, hyperbolic for $L \in \left(aR_0, aR_1\right) \cup \left(a(R_0 + R_1), \infty \right)$, and elliptic for $L \in \left(0, a(R_0 +R_1)\right) \cup \left(aR_1, a(R_0 +R_1)\right)$. We assume the following non-resonance condition:

\begin{itemize}
\item[(B)]	$\lambda^4 \neq 1$, or equivalently,  
$\left(L-aR_0\right)\left(L-aR_1 \right) \neq 0$.
\end{itemize}

\begin{theorem}\label{thm.t1.asym}
	Let $F(v) = a(v_1^2 + v_2^2)^{1/2} + b(v_1^4 + b_2^4)^{1/4}$ with $a+b=1$, and $Q(\alpha(t), \beta(t),L)$ be given. Additionally, assume that the resonance condition (B) is satisfied. Then the first twist coefficient of the two-step billiard map $T^2$ is given by
	\begin{equation}
		\tau_1 = \frac{\Delta}{8aR_0R_1(L-aR_0)(L-aR_1)\left(L-a(R_0+R_1)\right)},
	\end{equation}
	\noindent where 
	\begin{align}
		\Delta &= 2 a R_0^2 \left(-L R_1 \left(a^2 L R_1''+4 a^2-18 a+12\right)+a \left(2 a^2-9 a+6\right) R_1^2+a (3 a-4) L^2\right) \nonumber \\
		&+ 2 a R_0^2 \left(-L R_1 \left(a^2 L R_1''+4 a^2-18 a+12\right)+a \left(2 a^2-9 a+6\right) R_1^2+a (3 a-4) L^2\right) \nonumber \\
		&+ R_0 \bigg(L^2 R_1 \left(a^2 L R_0''+a^2 L R_1''+8 a^2-36 a+24\right)-2 a L R_1^2 \left(a^2 L R_0''+4 a^2-18 a+12\right) \\
		&+a^2 R_1^3 \left(a^2 L R_0''+2 a^2-9 a+6\right)+a (4-3 a) L^3 \bigg)  +a^2 R_0^3 \big(R_1 \left(a^2 L R_1''+2 a^2-9 a+6\right) \nonumber \\
		&+a (4-3 a) L\big)-a (3 a-4) L R_1 \left(L-a R_1\right){}^2. \nonumber 
	\end{align}
\end{theorem}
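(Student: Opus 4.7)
The plan is to follow the same four-step normal-form procedure developed in Sections~\ref{ss.rigid}--\ref{ss.symplectic} in the proof of Theorem~\ref{thm.t1.sym}, but applied directly to the two-step billiard map $\cT^2$ rather than to an abstract one-step map. The reason for this change is that the rotational symmetry $\text{Rot}_\pi$ exploited in Section~\ref{ss.reducing.one.step} is unavailable when $R_0\neq R_1$, so the reduction $\tau_1(\cT^2,P)=2\tau_1(\cT,P)$ no longer applies, and the Birkhoff normal form must be extracted from $\cT^2$ itself.

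First I would nail down the linear part. Applying Proposition~\ref{pro.DT.Minkowski} at each of the two vertices (using $R_0'(0)=R_1'(0)=0$ together with the tangent vectors $\gamma_0'(0)=\langle 0,-1\rangle$ and $\gamma_1'(0)=\langle 0,1\rangle$) yields the two single-step tangent matrices already displayed in the theorem; their product gives $D_P\cT^2$. The trichotomy parabolic/hyperbolic/elliptic follows from $\Tr(D_P\cT^2)$, and condition~(B) is precisely the non-resonance condition $\lambda^4\neq 1$ needed later, when the denominators $1-\lambda^2$ and $1-\bar\lambda^4$ appear in the third-order change of coordinates.

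Next I would assemble the third-order Taylor data of $\cT^2$ at $P$. For each $\partial^{j+k}s_2/\partial s^j\partial u^k$ and $\partial^{j+k}u_2/\partial s^j\partial u^k$ with $j+k\leq 3$, the chain rule expresses it as a polynomial in (i) the partial derivatives of $(s_1,u_1)$ with respect to $(s,u)$ evaluated at $(0,0)\in\cM_0$, and (ii) the partial derivatives of $(s_2,u_2)$ with respect to $(s_1,u_1)$ evaluated at $(0,0)\in\cM_1$. Both sets of inner partials are already computed in Section~\ref{ss.Taylor.coefficients}: one simply specialises those formulas to $(R,R'')=(R_0,R_0'')$ for the first factor and to $(R,R'')=(R_1,R_1'')$ for the second. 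The representative examples for $\partial s_2/\partial s$ and $\partial^2 s_2/\partial s^2$ already displayed in the statement make the bookkeeping explicit; the remaining derivatives are obtained by the same recipe. The second-order coefficients of $\cT^2$ at $P$ vanish because the corresponding single-step coefficients vanish at each vertex (a consequence of $R_j'(0)=0$). With these $\tilde a_{jk},\tilde b_{jk}$ in hand, I would replay the four transformations of Sections~\ref{ss.rigid}--\ref{ss.symplectic} verbatim: the linear conjugation by $B=\text{diag}(\eta,\eta^{-1})$ with $\eta=(-\tilde a_{01}\tilde b_{10})^{1/4}$ to rotate, the complexification $(z,w)=(x+iy,x-iy)$ to diagonalise, the cubic change $z'=z+p_3(z,w)$ to kill the $c_{30},c_{12},c_{03}$ monomials (the $c_{21}$ term always cancels, and the denominators $1-\lambda^2$, $1-\bar\lambda^4$ are nonzero exactly under~(B)), and finally the symplectic correction via the generating function $G(x,Y)=xY+g_4(x+iY,x-iY)$. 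The surviving coefficient $c_{21}=i\tau_1$ is the first twist coefficient.

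The hard part is purely computational. The chain rule for third-order derivatives of a composition $\bR^2\to\bR^2\to\bR^2$ produces a large number of terms, and simplifying the resulting rational function of $a,L,R_0,R_1,R_0'',R_1''$ into the compact numerator $\Delta$ over the denominator $8aR_0R_1(L-aR_0)(L-aR_1)(L-a(R_0+R_1))$ is delicate and most naturally handled with a computer algebra system. A reassuring sanity check is the specialisation $R_0=R_1=R$, $R_0''=R_1''=R''$: by the argument of Section~\ref{ss.reducing.one.step}, the resulting expression must equal $2\tau_1(\cT,P)$ as given by Theorem~\ref{thm.t1.sym}, which catches any sign or factor error in the intermediate symbolic manipulations.
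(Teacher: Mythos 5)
Your proposal is correct and follows essentially the same route as the paper's own (sketched) proof: compute $D_P\mathcal{T}^2$ as the product of the two single-step tangent matrices, obtain the third-order Taylor coefficients of $s_2,u_2$ by the chain rule from the single-step data specialised to $(R_0,R_0'')$ and $(R_1,R_1'')$, note the vanishing of all second-order terms from $R_j'(0)=0$, and then repeat the transformations of Sections~\ref{ss.rigid}, \ref{ss.killing} and \ref{ss.symplectic} verbatim on $\mathcal{T}^2$, with condition~(B) guaranteeing the nonvanishing denominators. Your added sanity check --- that specialising $R_0=R_1=R$, $R_0''=R_1''=R''$ must reproduce $2\tau_1(\mathcal{T},P)$ from Theorem~\ref{thm.t1.sym} --- is a sensible supplement to the paper's deferral of the symbolic details to \cite{Vil}.
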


\begin{proof}
We only give a sketch of the computation. The detailed computation be found in \cite{Vil}. The Taylor coefficients  $\ta_{jk}$ and $\tb_{jk}$ can be done by taking derivatives of the two functions $s_2$ and $u_2$ repeatedly. Combining with the assumption on two functions $\alpha$ and $\beta$ being even, we have that   $\ta_{jk}$'s and $\tb_{jk}$'s for $j+k = 2$ are identically zero. The remaining of the computation for $\tau_1$ is exactly the same as in Section~\ref{ss.rigid}, \ref{ss.killing} and \ref{ss.symplectic}.
\end{proof}

\begin{remark}
	The first twist coefficient of elliptic periodic orbits of period $2$ in Euclidean billiards \cite{JZ22, KP05} is recovered by setting $a=1$ and $b=0$:
	\begin{equation}
		\tau_1 = \frac{1}{8}\bigg(\frac{R_0+R_1}{R_0R_1}-\frac{L}{R_0+R_1-L}\left(\frac{R_1-L}{R_0-L}R_0'' + \frac{R_0-L}{R_1-L}R_1''\right)\bigg).
	\end{equation}
\end{remark}

\section{Applications}\label{sec.applications}

In this section we use the first twist coefficient given in Theorem \ref{eq: symmetric twist} to prove the nonlinear stability of the periodic orbits of period $2$ on various symmetric billiard tables. Using the assumption $a+b =1$, we will shorten the notation for Minkowski norm as $F_{a}= F_{a, 1-a}$ with $ 0< a \le 1$, and will denote the Minkowski billiard map as $\cT_a$ to emphasize its dependence on the parameter $a$.
We have showed in
Section~\ref{ss.tangent.map} that the periodic point $P$ of period $2$ is elliptic for $0<L<2aR$, parabolic for $L=2aR$, and hyperbolic for $L>2aR$.

\subsection{Euclidean circular tables} 
Consider the Euclidean circular billiard table $Q_r$ on the Minkowski plane $(\bR^2, F_{a})$ bounded by the circle $x^2 + y^2 = r^2$. Then the orbit bouncing back and forth along the $y$-axis is a periodic orbit of period $2$,  say $\mathcal{O}_2(a, r) =\{P_r, \cT_a(P_r)\}$. In this case, we have $R = r$ and $L=2r$. From the stability conditions above, it follows that the periodic orbit $\mathcal{O}_2(a, r)$ is hyperbolic for $0< a < 1$ (see Figures \ref{fig:circleorbits}(a) and \ref{fig:circlephase}) and parabolic only for $a=1$, which corresponds to the Euclidean norm $F_{1}(v) = \|v\|_2$. In particular, such an orbit cannot be elliptic. Additionally, numerical simulations demonstrate the nonlinear stability of the periodic 2-orbits $\{(\frac{\sqrt{2}}{2},\frac{\sqrt{2}}{2}),(-\frac{\sqrt{2}}{2},-\frac{\sqrt{2}}{2})\}$ and $\{(-\frac{\sqrt{2}}{2},\frac{\sqrt{2}}{2}),(\frac{\sqrt{2}}{2},-\frac{\sqrt{2}}{2})\}$ (see Figures \ref{fig:circleorbits}(b) and \ref{fig:circlephase}). Although the twist coefficient in equation (\ref{eq: symmetric twist}) does not pertain to these periodic 2-orbits, we can obtain their twist coefficient in a similar way as presented above. However, equations (\ref{eq: dsds})-(\ref{eq: dudu}) would not simplify as nicely as they do in equations (\ref{eq: dsds mixed norm})-(\ref{eq: dudu mixed norm}).

\begin{figure}[htbp]
	\centering
	\subcaptionbox{100 reflections of the billiard orbit with initial point $(0,-1)$ and direction $\pi/2+0.01$}{\includegraphics[width=0.48\textwidth]{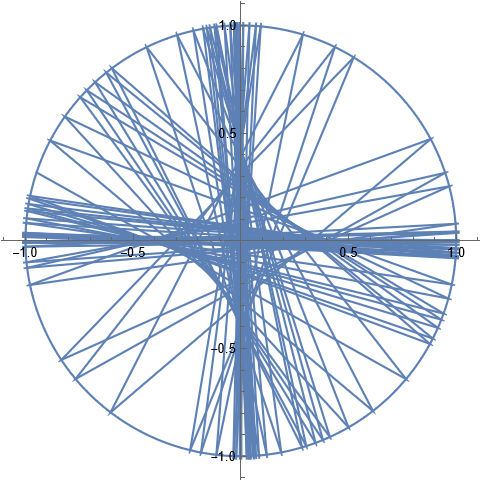}}%
	\hfill 
	\subcaptionbox{100 reflections of the billiard orbit wit initial point $(\frac{\sqrt{2}}{2},\frac{\sqrt{2}}{2})$ and initial direction $\frac{5\pi}{4}+0.1$}{\includegraphics[width=0.48\textwidth]{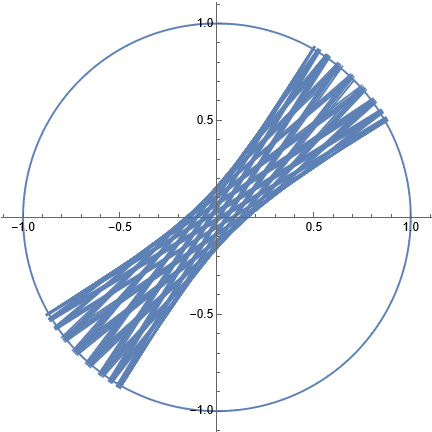}}%
	\caption{Dynamics billiards on the table bounded by $x^2 + y^2 = 1$ with Minkowski norm $F_{0.8}$.} \label{fig:circleorbits}
\end{figure}

\begin{figure}[htbp]
	\centering
	\includegraphics[width=0.7\textwidth]{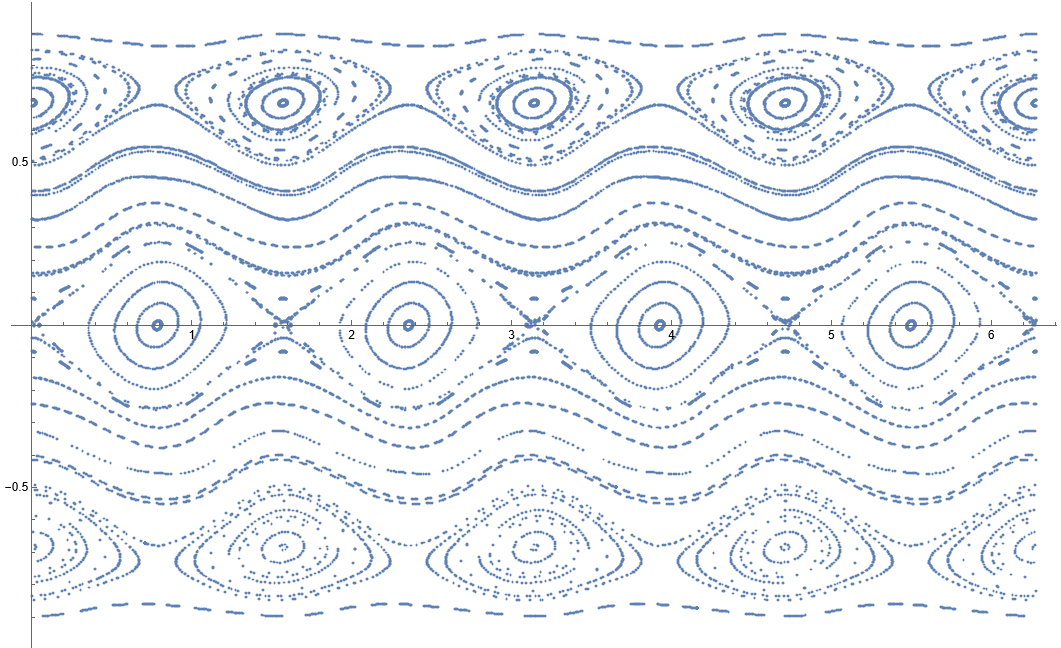}
	\caption{Phase space of Minkowski billiards with norm $F_{0.8}$ on the table bounded by $x^2 + y^2 =1$} \label{fig:circlephase}
\end{figure}

\subsection{Symmetric lemon billiards}
Next, consider the symmetric billiard table $Q(L)$ discussed in Section~\ref{ss.sym.billiards}, where $\gamma_0$ and $\gamma_1$ are Euclidean circular arcs of radius $1$. In this case, $R=1, R'' = 0$, and $L\in (0,2)$. From the stability conditions, the orbit $\mathcal{O}_2(a,L) =\{P_L, \cT_a(P_L)\}$ is elliptic whenever $L<2a$ with nonresonance condition $L\neq a$. The first twist coefficient is given by
\begin{equation}
	\tau_1(\mathcal{T}_a, P_L) = \frac{a^2 (3 L-4)+a (18-4 L)-12}{8 a^2 (2 a-L)}.
\end{equation}
Note that $\tau_1(\mathcal{T}_a, P_L)=0$ when $L = \frac{2\left(2a^2-9a+6\right)}{a\left(3a-4\right)}$.

\begin{figure}[htbp]
	\centering
	\includegraphics[width=0.8\linewidth]{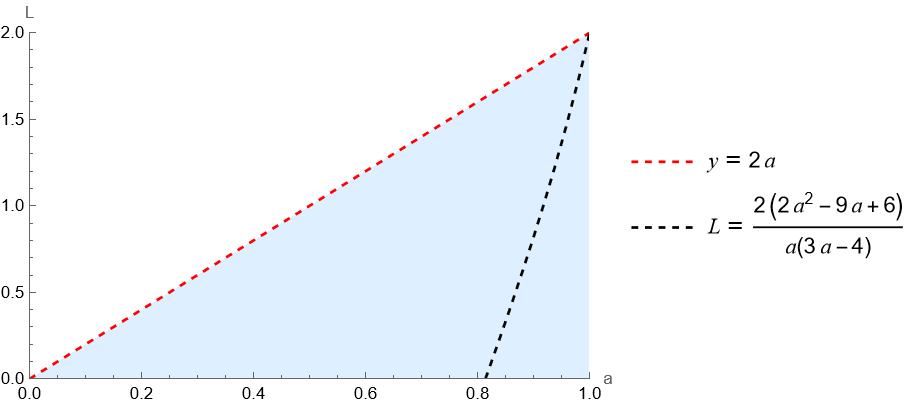}
	\caption{Stability regions for the orbit $\mathcal{O}_2(a, L)$ on a symmetric lemon table $Q(L)$ on the Minkowski plane with norm $F_{a}$.}
	\label{fig:lemonsymmetricgraph}
\end{figure}

\begin{corollary}
Let $0<a <1$
and  $\mathcal{O}_2(a, L)$ be the periodic orbit on the symmetric lemon table $Q(L)$ running along the $x$-axis. Then the orbit  $\mathcal{O}_2(a, L)$ is nonlinearly stable for  
$L \in (0,2a) \backslash \left\{ a, \frac{4a^2-18a+12}{a(3a-4)} \right\}$.
\end{corollary}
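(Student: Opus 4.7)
The plan is to deduce the corollary as a direct application of Theorem~\ref{thm.t1.sym} together with Moser's twist mapping theorem (Theorem~\ref{thm: Moser twist}), with all computational work already carried out in the preceding subsection. For the symmetric lemon table $Q(L)$ whose two boundary arcs are Euclidean circles of radius $1$, the geometric data at the periodic point are $R_0 = R_1 = R = 1$ and $R''(0) = 0$, so the general formula specialises to a rational function of $a$ and $L$ alone.

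First, I would verify the hypotheses required to apply Theorem~\ref{thm.t1.sym}. From the classification in Section~\ref{ss.tangent.map}, the tangent matrix $D_{P_L}\cT_a$ is elliptic precisely when $0 < L < 2aR = 2a$, which holds on the entire range considered. The nonresonance condition (A) demands $L \neq aR = a$, accounting for the first excluded value in the statement. Thus on the set $L \in (0,2a)\setminus\{a\}$ the first twist coefficient $\tau_1(\cT_a, P_L)$ is well defined and is given by specialising \eqref{eq: symmetric twist} to $R=1$, $R''=0$, yielding the expression
\[
\tau_1(\cT_a, P_L) = \frac{a^2(3L-4) + a(18-4L) - 12}{8 a^2 (2a-L)}
\]
already displayed in the excerpt.

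Next, I would solve $\tau_1(\cT_a, P_L) = 0$. Since the denominator is nonzero throughout the elliptic range, vanishing is equivalent to the vanishing of the numerator, a linear equation in $L$ whose unique root is $L = \frac{4a^2 - 18a + 12}{a(3a-4)}$. This is precisely the second value excluded in the corollary. For every other $L$ in $(0,2a)\setminus\{a\}$ we therefore have $\tau_1(\cT_a, P_L) \neq 0$.

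Finally, I would invoke the reduction of Section~\ref{ss.reducing.one.step}: the two-step return map $\cT_a^2$ at $P_L$ has first twist coefficient $\tau_1(\cT_a^2, P_L) = 2\tau_1(\cT_a, P_L)$, which is nonzero under the stated conditions. Moser's twist mapping theorem applied to $\cT_a^2$ then gives that $P_L$ is contained in a nested sequence of invariant disks bounded by invariant circles, i.e., the periodic orbit $\mathcal{O}_2(a,L)$ is nonlinearly stable. The argument involves essentially no obstacle: the only care needed is bookkeeping — confirming that the exclusion $L = a$ matches the nonresonance condition (A) rather than some stronger resonance, and correctly tracking the factor of $2$ when passing from the reduced one-step abstract map to the genuine return map $\cT_a^2$.
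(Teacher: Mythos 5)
Your proposal is correct and follows essentially the same route as the paper: specialize Theorem~\ref{thm.t1.sym} to $R=1$, $R''=0$, note ellipticity for $L<2a$ and nonresonance (A) excluding $L=a$, solve the linear numerator equation for the single zero $L=\frac{4a^2-18a+12}{a(3a-4)}$, and conclude via Moser's twist mapping theorem together with the reduction $\tau_1(\cT_a^2,P_L)=2\tau_1(\cT_a,P_L)$ from Section~\ref{ss.reducing.one.step}. This matches the paper's (implicit) argument, so no further comparison is needed.
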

\begin{remark}
It is not clear if the periodic orbit  $\mathcal{O}_2(a, L)$ is nonlinearly stable when $L= \frac{4a^2-18a+12}{a(3a-4)}$. In this case, $\tau_1(\mathcal{T}_a, P_L) =0$, and finding the next coefficient $\tau_2(\mathcal{T}_a, P_L)$ may be helpful.
\end{remark}

\subsection{Euclidean elliptical tables}
Let $\delta \in (0,1)$, and $E(\delta)$ be the billiard table $E(\delta)$ bounded by the Euclidean ellipse given $x^2  + \frac{y^2}{\delta^2}= 1$. It is easy to see that there is a periodic orbit  $\mathcal{O}_2(a, \delta)=\{P_{\delta}, \cT_a(P_{\delta}\}$ of period $1$ running along the  minor axis of the ellipse $E(\delta)$. Then $L = 2\delta$ and $R = 1/\delta$, so that $\frac{L}{R}=2\delta^2$. It follows that $\mathcal{O}_2(a, \delta)$ is elliptic for $\delta < \sqrt{a}$, parabolic for $\delta = \sqrt{a}$, and hyperbolic for $\delta > \sqrt{a}$. Consequently, $\mathcal{O}_2(a, \delta)$ is always elliptic in the Euclidean case, and the non-resonance condition is satisfied for $\delta \neq \frac{\sqrt{a}}{2}$. A short calculation shows $R''=\frac{3\left(\delta^2-1\right)}{\delta}$ so that the first twist coefficient is given by
\begin{equation}
	\tau_1(P_{\delta}, \cT_a) = \frac{\delta\left(-6+a(9+a-4\delta^2)\right)}{8a^2(a-\delta^2)}.
\end{equation}

\begin{figure}[htbp]
	\centering
	\includegraphics[width=0.8\linewidth]{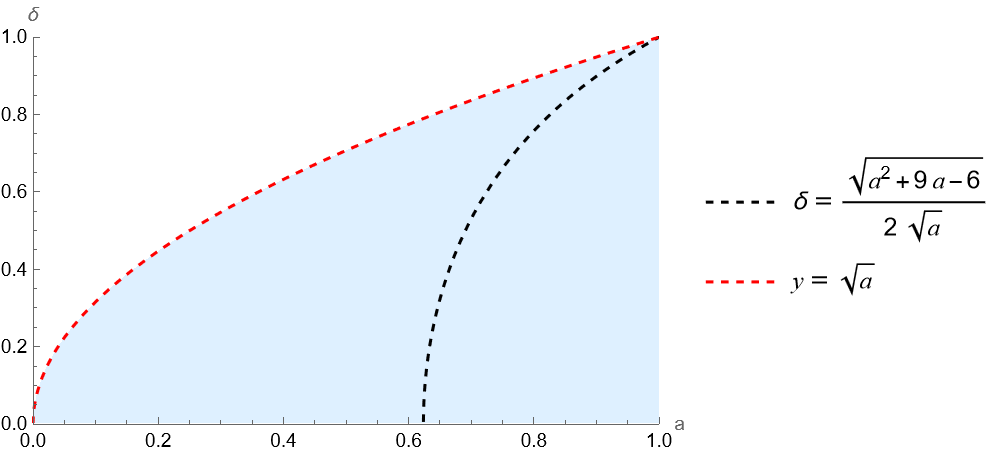}
	\caption{Stability regions for the orbit $\mathcal{O}_2(a, \delta)$ on an elliptic table $E(\delta)$ on the Minkowski plane with norm $F_{a}$.}
	\label{fig: elliptic billiard graph}
\end{figure}

Note that $\tau_1(P_{\delta}, \cT_a)=0$ when $\delta = \frac{\sqrt{a^2+9a-6}}{2\sqrt{a}}$. This value of $\delta$ only makes sense when it is real and less than $\sqrt{a}$ which occurs whenever $a \in \left(\frac{\sqrt{105}-9}{2},1\right)$

\begin{corollary}
	On the elliptical billiard table $E(\delta)$, the periodic orbit $\mathcal{O}_2(a, \delta)$ running along the minor axis is nonlinearly stable whenever $\delta < \sqrt{a}$ and $\delta \neq \frac{\sqrt{a^2 + 9a -6}}{2\sqrt{a}}$ and $\delta \neq \frac{\sqrt{a}}{2}$.
\end{corollary}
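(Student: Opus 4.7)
The plan is to invoke Moser's twist mapping theorem (Theorem~\ref{thm: Moser twist}) after checking that the first twist coefficient $\tau_1$, supplied by the symmetric formula of Theorem~\ref{thm.t1.sym}, is nonzero at $P_\delta$. Since the ellipse $E(\delta)$ is symmetric about its minor axis, the 2-orbit running along the $y$-axis fits the symmetric framework of Section~\ref{ss.sym.billiards} with $R_0 = R_1 = R$. The horizontal connectors appearing in the formal definition of $Q(L,\alpha,\beta)$ play no role here, because $\tau_1$ depends only on the local third-order Taylor data at the two reflection vertices $(0,\pm\delta)$.

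First, I would extract the geometric data at these vertices. From $L = 2\delta$ and the standard formula for the Euclidean radius of curvature along $\gamma(t) = (\cos t, \delta \sin t)$ one reads off $R = 1/\delta$, $R'(0) = 0$ (forced by symmetry), and a short differentiation at $t = \pi/2$ gives $R''(0) = 3(\delta^2-1)/\delta$. Substituting $L=2\delta$, $R=1/\delta$, and this value of $R''$ into the formula of Theorem~\ref{thm.t1.sym} and simplifying collapses the numerator $a^2 L R R'' + (4-3a)aL + 2(2a-9)aR + 12R$ to $(2/\delta)(4a\delta^2 - a^2 - 9a + 6)$, and the denominator $8a^2R(L-2aR)$ to $16a^2(\delta^2-a)/\delta^2$, yielding
\begin{equation*}
\tau_1(P_\delta,\mathcal{T}_a) \;=\; \frac{\delta\bigl(-6 + a(9 + a - 4\delta^2)\bigr)}{8a^2(a-\delta^2)},
\end{equation*}
which is exactly the expression displayed just before the corollary.

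Second, I would read off the hypotheses of Moser's theorem from this expression. Ellipticity of $D\mathcal{T}_a$ at $P_\delta$ is the condition $L < 2aR$, equivalently $\delta < \sqrt{a}$; the non-resonance condition~(A) of Section~\ref{ss.tangent.map}, $L \neq aR$, excludes the threshold $\delta = \sqrt{a}/2$ as recorded in the paper; and the numerator of $\tau_1$ vanishes precisely when $\delta = \sqrt{a^2 + 9a - 6}/(2\sqrt{a})$. On the complement of these three values $\tau_1 \neq 0$, so Moser's twist mapping theorem applies and yields the nonlinear stability of $\mathcal{O}_2(a,\delta)$. There is no serious obstacle in this argument; the only point requiring any attention is the justification that Theorem~\ref{thm.t1.sym} applies verbatim to the ellipse, which reduces to the observation that $E(\delta)$ is locally the graph of an even function at each vertex, immediate from the symmetry about the $y$-axis.
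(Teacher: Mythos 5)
Your route coincides with the paper's own: the paper justifies this corollary exactly by substituting $L = 2\delta$, $R = 1/\delta$, $R'' = 3(\delta^2-1)/\delta$ into Theorem~\ref{thm.t1.sym}, reading off the ellipticity threshold $\delta < \sqrt{a}$ and the vanishing locus of $\tau_1$, and invoking Theorem~\ref{thm: Moser twist}. Your computation of $R''$ at the vertex, and the collapse of the numerator and denominator of Eq.~\eqref{eq: symmetric twist} to $\frac{2}{\delta}\left(4a\delta^2 - a^2 - 9a + 6\right)$ and $\frac{16a^2(\delta^2-a)}{\delta^2}$ respectively, both check out against the displayed formula for $\tau_1(P_\delta,\mathcal{T}_a)$.

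There is, however, a genuine error in your non-resonance step --- one that the paper itself also contains, so you have faithfully reproduced a flaw rather than introduced one. Condition (A) is $L \neq aR$; with $L = 2\delta$ and $R = 1/\delta$ this reads $2\delta^2 \neq a$, i.e. $\delta \neq \sqrt{a/2} = \frac{\sqrt{2a}}{2}$, \emph{not} $\delta \neq \frac{\sqrt{a}}{2}$ as you assert (and as the corollary's hypotheses record). The excluded point $\delta = \frac{\sqrt{a}}{2}$ in fact corresponds to $\cos\theta = \frac{L}{aR} - 1 = -\frac{1}{2}$, i.e. the cubic resonance $\lambda^3 = 1$, which is harmless in this construction: since the quadratic Taylor terms vanish (a consequence of $R'(0)=0$), the transformation of Section~\ref{ss.killing} only requires $\lambda^2 \neq 1$ and $\lambda^4 \neq 1$, as seen from the denominators $1-\lambda^2$ and $1-\overline{\lambda}^4$ in the coefficients $d_{30}$ and $d_{03}$. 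By contrast, at $\delta = \frac{\sqrt{2a}}{2}$ --- which lies inside the elliptic range $\delta < \sqrt{a}$ and is \emph{not} excluded by the corollary as stated --- one has $\lambda = \pm i$, the coefficient $d_{03} = c_{03}/(1-\overline{\lambda}^4)$ is undefined, Theorem~\ref{thm.t1.sym} does not apply, and the resonant term $\overline{z}^3$ cannot be removed from the normal form, so your argument (like the paper's) does not establish stability there. The repair is to replace the exclusion $\delta \neq \frac{\sqrt{a}}{2}$ by $\delta \neq \frac{\sqrt{2a}}{2}$; with that correction your proof is complete and agrees with the paper's in every other respect, including the minor point (which you handle correctly, and which the paper glosses over) that fitting the minor-axis orbit into the framework of Section~\ref{sec.billiards.symmetry} uses the invariance of $F_{a,b}$ under rotation by $\pi/2$ together with the local even-graph structure at the two vertices.
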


\begin{figure}[htbp]
	\centering
	\includegraphics[width=0.7\textwidth]{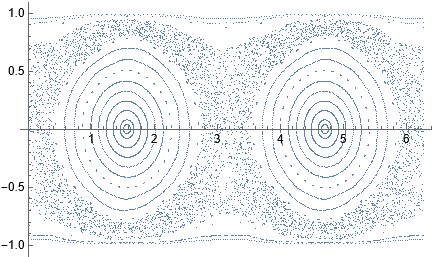}
	\caption{Phase space of Minkowski billiards with norm $F_{0.8}$ on the table $E(0.5)$}
\end{figure}

\subsection{General symmetric billiards}
For the class of billiard tables $Q(\alpha, \beta, L)$ where $\alpha(t) = \beta(t)$, $R(s)$ satisfies $R = \frac{1}{2 \alpha_2}$ and $R'' = 6\alpha_2 - \frac{6\alpha_4}{\alpha_2^2}$ at both $\gamma_0(0)$ and $\gamma_1(0)$. Hence the orbit $\mathcal{O}_2(a, \alpha)=\{P_{\alpha}, \cT_a(P_{\alpha}\}$ is elliptic whenever $L \in \left(0, \frac{a}{\alpha_2}\right)$. Further, the non-resonance condition is satisfied whenever $L \neq \frac{a}{2\alpha_2}$. From equation (\ref{eq: symmetric twist}), we have that $\tau_1(P_{\alpha}, \cT_a) =0$ when 
\begin{align}
R'' = -\frac{-3 a^2 L+4 a^2 R+4 a L-18 a R+12 R}{a^2 L R}, 
\end{align}
or equivalently, 
\begin{align}
\alpha_4 = -\frac{-2 a^2 \alpha _2^2+9 a \alpha _2^2-4 a \alpha _2^3 L-6 \alpha _2^2}{3 a^2 L}.
\end{align}


\end{document}